\newtheorem{theorem}{\hskip\parindent Theorem}[section]
\newtheorem{lemma}{\hskip\parindent Lemma}[section]
\newtheorem{definition}{\hskip\parindent Definition}[section]
\newtheorem{remark}{\hskip\parindent Remark}[section]
\def\loc{\text{\rm loc}}
\begin{document}

\noindent {\footnotesize {UDK 517.51}} 
\vspace{5mm}

\begin{center} 
{\bf
\Large Spline\! wavelet\! decomposition\! in\! weighted\! function\! spaces\footnote{The results of \S~4 of the work were performed at Steklov Mathematical Institute of Russian Academy of Sciences under financial support of the Russian Science Foundation (project 19-11-00087). The work of the rest part of the paper was carried out within the framework of the State Tasks of Ministry of Education and Science of Russian Federation for V.A. Trapeznikov Institute of Control Sciences of Russian Academy of Sciences and Computing Center of Far--Eastern Branch of Russian Academy of Sciences, it was also partially supported by the Russian Foundation for Basic Research (project 19--01--00223).}} 
\end{center}\begin{center}
{\bf \large   E. P. Ushakova}
\end{center}
\vskip0.2cm

{\small Battle--Lemari\'{e} wavelet systems of natural orders are established in the paper. The main result of the work is decom\-po\-si\-tion theorem in Besov and Triebel--Lizorkin spaces with local Muckenhoupt weights, which is performed in terms of bases generated by the systems of such a type. Battle--Lemari\'{e} wavelets are splines and suit very well for the study of integration operators. 

Bibliography: 80 items.

{\bf Key words:} Besov space, Triebel--Lizorkin space, local Muckenhoupt weight, Battle--Lemari\'{e} wavelet system, B--spline, decomposition theorem.}

\parindent 15pt
\section{Introduction}

This paper is a generalisation of \cite{RMC}.
Let $B_{pq}^s(\mathbb{R}^N,w)$ and $F_{pq}^s(\mathbb{R}^N,w)$ be weighted analogs of the unweighted Besov $B_{pq}^s(\mathbb{R}^N)$ and Triebel--Lizorkin spaces $F_{pq}^s(\mathbb{R}^N)$ in Euclidean $N-$space $\mathbb{R}^N$ with $0<p\le\infty$, $0<q\le\infty$, $-\infty<s<\infty$ and weights $w$ of Muckenhoupt type (see \S~\ref{WFS} for the definitions). The main result in \cite{RMC} is a decomposition of $B_{pq}^s(\mathbb{R}^N,w)$ and $F_{pq}^s(\mathbb{R}^N,w)$, which was performed in terms of compactly supported linear combinations of elements of Battle--Lemari\'{e} spline wavelet systems of natural orders. Recall that, for a given $n\in\mathbb{N}$, Battle--Lemari\'{e} scaling $\phi_n^{BL}$ and wavelet $\psi_n^{BL}$ functions generate an orthonormal spline basis in $L_2(\mathbb{R})$ (see \S~\ref{SWS}). But, being not compactly supported, the functions $\phi_n^{BL}$ and $\psi_n^{BL}$, themselves, cannot be effectively used for decomposing elements of weighted function spaces of Besov and Triebel--Lizorkin type. An important localisation property of elements of Battle--Lemari\'{e} systems was established in \cite{RMC} (see also \cite{JMAA}): there exist finite linear combinations of integer shifts of $\phi_n^{BL}$ or $\psi_n^{BL}$ resulting in some new functions $\Phi_n$ or $\Psi_n$ having compact supports in $\mathbb{R}$ and generating a semi--orthogonal Riesz basis. The result in \cite{RMC} uses the systems $\{\Phi_n,\Psi_n\}$ for decomposing elements of weighted smoothness function spaces $B_{pq}^s(\mathbb{R}^N,w)$ and $F_{pq}^s(\mathbb{R}^N,w)$ into linear sums of simple pieces. 

In recent decades, several concepts of decompositions in Besov and Triebel--Lizorkin type spaces became known. Among them there are atomic and molecular {representations} \cite{Rou,Bow1,Bow2}, sub--atomic (quarkonial) \cite{Tr3,Far} and wavelet \cite{HTr2,Tr5} decompositions as well as local means {characterisations} \cite{TBan,WBan}. All these approaches have found their applications. For instance, representations by wavelet bases appeared to be an effective tool for in\-ves\-ti\-ga\-tions of cha\-rac\-te\-ris\-tic numbers of linear operators 
\cite{{HTr1},{HTr1'},{HTr2}, HSc, HSc1, HSc2, HSc3, KLSSpp, KLSS, NU, Skr, Va}. 
The purpose of this work is the characterisation of Besov $B_{pq}^{s,w}(\mathbb{R}^N)$ and Triebel--Lizorkin $F_{pq}^{s,w}(\mathbb{R}^N)$ spaces (see \S~\ref{WFS} for the definitions), with weights $w$ of a more general type than in $B_{pq}^s(\mathbb{R}^N,w)$ and $F_{pq}^s(\mathbb{R}^N,w)$, with help of spline wavelet systems of Battle--Lemari\'{e} type suitable to further study of characteristic values (entropy numbers, approximation numbers, eigenvalues, etc.) of integration and differentiation operators between the above spaces. 

Spline decompositions in Besov and Triebel--Lizorkin type spaces go back to Z. Ciesielski and T. Figiel \cite{Cie,Cie1,CF,CF1,CF2}. Further studies in this context were undertaken in e.g. \cite{Rop,SjStr,G-CK,G-CK1,Bou,ABM-R,Tyu1,Tyu2}. The connection with {multiresolution analysis can be seen in \cite{Tr,B,B1,L}.
Spline wavelets are also discussed in standard books} {on wavelets \cite{Chui,Dau,Me,W,Mal}. One may also consult \cite[\S~2.12.3]{Tr1} and \cite[\S~2.5]{Tr5}. Spline wavelet systems can be orthogonal or not. One of the most popular
representatives of non--orthogonal splines is the Cohen--Daubechies--Feauveau class of biorthogonal wavelets \cite{CDF}. Dropping the orthogonality requirement helps sometimes to impart properties those are absent in orthonormal systems (for instance, compactness of supports, symmetry, etc.).  
}

Scales of unweighted spaces $B_{pq}^s(\mathbb{R}^N)$ and $F_{pq}^s(\mathbb{R}^N)$, accordingly $0<p\le\infty$, $0<q\le\infty$ and $-\infty<s<+\infty$, contain, in particular, classical Sobolev spaces, local Hardy spaces, Bessel potential spaces, H\"{o}lder--Zygmund spaces. History of related studies can be seen in a number of monographs \cite{BIN,Tr1,Tr2,Tr2',Tr5} and series of papers \cite{FJ,FJ'} (see also \cite{ScTr,BN,SiTr,EdTr}). Weighted function spaces of Besov and Triebel--Lizorkin type were investigated in e.g. \cite{HTr1,HTr1',HTr2} for so called admissible weights $w$. In \cite{Sch,Sch'} they were studied with locally regular weights, in {\cite{Bui,Str,LR,Bow1,Bow2,HP,HSc,HST}} --- with $w$ from the Muckenhoupt class. Authors of \cite{R,IS',WBan,WM,IS} dealt with $w$ belonging to local Muckenhoupt {weight class}. In this work we focus also on weighted Besov spaces $B_{pq}^{s,w}(\mathbb{R}^N)$ and Triebel--Lizorkin spaces  $F_{pq}^{s,w}(\mathbb{R}^N)$ with $w$ belonging to some local Muckenhoupt class $\mathscr{A}_p^\loc$, $p\ge 1$ (\S~\ref{rw}).

The paper is organised as follows. In \S~\ref{WFS} we collect information about Muckenhoupt and local Muckenhoupt weight classes and recall definitions of related smoothness function spaces of Besov and Triebel--Lizorkin types. 
\S~\ref{SWS} is devoted to a class of Battle--Lemari\'{e} spline wavelet systems of natural orders $n$. We reproduce briefly {constructions} of scaling $\phi_n$ and wavelet $\psi_n$ functions from this class and establish localisation algorithms resulting in some related to them compactly supported functions $\mathbf{\Phi}_n$ and $\mathbf{\Psi}_n$. Our main theorem  uses the $\mathbf{\Phi}_n$ and $\mathbf{\Psi}_n$ for decomposing Besov and Triebel--Lizorkin spaces $B_{pq}^{s,w}(\mathbb{R}^N)$ and $F_{pq}^{s,w}(\mathbb{R}^N)$ with local Muckenhoupt weights $w$. It is given in \S~\ref{mn} (see Theorem \ref{main}).

Throughout the paper relations of the type $A\lesssim B$ mean that $A\le cB$ with
some constant $c$, independent of $A$ and $B$, but depending, possibly, on number parameters. We write
$A\approx B$ instead of $A\lesssim B \lesssim A$ and $A\simeq B$ instead of $A=cB$.  
We use $\mathbb Z$, $\mathbb N$ and $\mathbb R$ for integers, natural and real numbers, respectively, and $\mathbb{C}$ for the complex plane. By $\mathbb{N}_0$ we denote the set $\mathbb{N}\cup\{0\}$. The both symbols $dx$ and $|\cdot|$ stand for the $N-$dimensional Lebesgue measure. 
The notation $[s]$ is used for the integer part of $s\in\mathbb{R}$, and $s_+=\max\{s,0\}$. 
We put $r':=
{r}/({r-1})$ if $0<r<\infty$ and $r'=1$ for $r=\infty$. The symbol $\hookrightarrow$ will be used for continuous embeddings. We make use of marks $:=$ and
$=:$ for introducing new quantities. We abbreviate $h(\Omega):=\int_\Omega h(x)\, dx$,
where {$\Omega \subset\mathbb{R}^N$} is some bounded measurable set.

\section{Classes of weights and function spaces}\label{WFS}
\subsection{Weight functions (weights)} Let $w$ be a locally  integrable function, positive almost everywhere (a weight) on $\mathbb{R}^N$.
\subsubsection{Classes of admissible and general locally regular weight functions} Let $\mathbb{N}_0^N$, $N\in\mathbb{N}$, be the set of all multi--indices $\gamma=(\gamma_1,\ldots,\gamma_N)$ with $\gamma_i\in\mathbb{N}_0$, and $|\gamma|=\sum_{j=1}^N\gamma_j$. We use the abbreviation $D^\gamma$ for derivatives.
\begin{definition}{\rm(\cite[Chapter~4]{EdTr}, \cite[Definition 2.1]{HTr2}) \label{Def1}
The class $\mathscr{W}^N$ of \textit{admissible weight functions} is the collection of all infinitely differentiable functions $w\colon \mathbb{R}^N\to (0,\infty)$ with the following properties:
\begin{itemize}
  \item[~] (i) for all $\gamma \in \mathbb{N}_0^N$ there exists a positive constant $c_\gamma$ such that
\begin{equation}\label{adm1}
  |D^\gamma w(x)| \le c_\gamma w(x)\quad \textrm{for all } x\in \mathbb{R}^N;
\end{equation}
  \item[~] (ii) there exist constants $c > 0$ and $\alpha\ge 0$ satisfying the condition
\begin{equation}\label{adm2}
  0 < w(x) \le c\,w(y)(1+|x-y|^2)^{\alpha/2}\quad \textrm{for all } x, y \in \mathbb{R}^N.
\end{equation}
\end{itemize}}
\end{definition} 
In particular, the functions $w(x)=\bigl(1+|x|^2\bigr)^{\alpha/2}$ and $w(x)=\bigl(1+\log(1+|x|^2)\bigr)^\alpha$ with $\alpha\not=0$ belong to the class $\mathscr{W}^N$ of admissible weights.

If a function
$w$ 
 in Definition \ref{Def1} satisfies \eqref{adm1} and, instead of \eqref{adm2}, the following exponential growth condition
\begin{equation*}
  0 < w(x) \le c\,w(y)\exp\bigl(c|x-y|^\beta\bigr)\quad \textrm{for all } x, y \in \mathbb{R}^N\quad\textrm{and some} \quad 0<\beta\le 1,
\end{equation*} then it is called \textit{a general locally regular weight}. Any admissible weight is locally regular. But, for instance, the weight $w(x)=\exp\bigl(|x|^\beta\bigr)$, $0<\beta\le 1$, is locally regular but is not admissible in the sence of Definition \ref{Def1} \cite{Sch,Sch',Ma}.

\subsubsection{Class of Muckenhoupt weights $\mathscr{A}_\infty$}\label{rw}  To describe Muckenhoupt class $\mathscr{A}_\infty$ we appeal to \cite{M,M1,M2,T,S,FS,HP}. 

Let $Q$ be a cube $Q\subset\mathbb{R}^N$ with sides parallel to the coordinate axes, $|Q|$ --- its volume. 

Let $L_r(\mathbb{R}^N)$, $0<r\le\infty$, denote the Lebesgue space of all measurable functions $f$ on $\mathbb{R}^N$ quasi--normed by 
$\|f\|_{L_r(\mathbb{R}^N)}:=\bigl(\int_{\mathbb{R}^N}|f(x)|^r\,dx\bigr)^{{1}/{r}}$ with the usual modification in the case $r=\infty$.

\begin{definition}{\rm (\cite[Chapter~V]{S}) (i) A weight $w$ belongs to the \textit{Muckenhoupt class} $\mathscr{A}_p$, $1<p<\infty$, if \begin{equation}\label{Mup}
\mathscr{A}_p(w):=\sup_{Q\subset\mathbb{R}^N}\frac{w(Q)}{|Q|}
\biggl(\frac{1}{|Q|}\int_{Q} w^{1-p'}\biggr)^{\frac{p}{p'}}<\infty;\end{equation}
(ii) $w\in\mathscr{A}_1$ if 
 \begin{equation}\label{Mu1}
\mathscr{A}_1(w):=\sup_{Q\subset\mathbb{R}^N}\frac{w(Q)}{|Q|}\,\|1/w\|_{L_\infty(Q)}<\infty;\end{equation}
(iii) the \textit{Muckenhoupt class} $\mathscr{A}_\infty$ is given by $\bigcup_{p\ge 1}\mathscr{A}_p.$
}\end{definition} Suprema in \eqref{Mup} and \eqref{Mu1} are taken over all cubes $Q\subset\mathbb{R}^N$.
The set $\mathscr{A}_\infty$ is stable with respect to translation, dilation and multiplication {by} a positive scalar, cf. \cite[Chapter~V]{S}. Moreover,\\ (P1) if $w\in\mathscr{A}_p$, $1<p<\infty$, then $w^{-p'/p}\in\mathscr{A}_{p'}$;\\
(P2) ({\it doubling property in a general form}) if $w\in\mathscr{A}_p$ then there exist constants $c,u>0$ such that $$w(Q_t)\le c\, t^u\, w(Q)$$ for all cubes $Q$ and their concentric cubes $Q_t$ with side lengths $l(Q)$ and $l(Q_t)$ satisfying $l(Q_t)=t\cdot l(Q)$, $t\ge 1$;\\
(P3) if $1\le p_1<p_2\le\infty$ then $\mathscr{A}_{p_1}\subset \mathscr{A}_{p_2}$;\\(P4) if $w\in\mathscr{A}_p$, $p>1$, then there exists some number $r<p$ such that $w\in\mathscr{A}_r$.\\ We refer to \cite[Chapter~V]{S} and e.g. \cite[Lemma 1.3]{HSc}, \cite[Lemma 2.3]{HP} for details.

The property (P4) {emerges} in the following definition of the number \begin{equation*}\label{r_0}r_0:= \inf\{r\ge 1\colon w\in\mathscr{A}_r\}<\infty, \qquad w\in\mathscr{A}_\infty,\end{equation*} playing a role of an important technical parameter in decomposition theorems.

A number of examples of weights $w$ belonging to the class $\mathscr{A}_\infty$ can be {seen} in e.g. \cite[Examples 1.5]{HSc}, \cite[Remark 2.4, Example 2.7]{HP} and \cite[\S~2.1.2]{WM}. One of typical representatives of the Muckenhoupt weight class is the following:
$$ w(x)=|x|^\alpha\in\mathscr{A}_p, \quad\textrm{where}\quad \begin{cases} -N<\alpha<N(p-1), & p>1,\\ -N<\alpha\le 0, &p=1.\end{cases}$$ 
Alternative definitions, further properties and examples of Muckenhoupt weights can be found in \cite{S,WM} and \cite[Lemma 1.4]{HSc} (see also references given there).

\subsubsection{Local Muckenhoupt weight functions $\mathscr{A}_\infty^\loc$} Here we introduce a class of weight functions covering the sets of admissible, locally regular and weights of Muckenhoupt type $\mathscr{A}_\infty$.

\begin{definition}\label{defin}{\rm (\cite{R}) (i) A weight $w$ belongs to the class $\mathscr{A}_p^\loc$, $1<p<\infty$, of \textit{local Muckenhoupt weights} if \begin{equation}\label{Mup'}
\mathscr{A}_p^\loc(w):=\sup_{|Q|\le 1}\frac{w(Q)}{|Q|}
\biggl(\frac{1}{|Q|}\int_{Q} w^{1-p'}\biggr)^{\frac{p}{p'}}<\infty;\end{equation}\\
(ii) $w\in\mathscr{A}_1^\loc$ if \begin{equation}\label{Mu1'}
\mathscr{A}_1^\loc(w):=\sup_{|Q|\le 1}\frac{w(Q)}{|Q|}\,\|1/w\|_{L_\infty(Q)}<\infty;\end{equation}\\
(iii) we say that $w\in\mathscr{A}_\infty^\loc$ if $w\in \mathscr{A}_p^\loc$ for some $1\le p<\infty$, that is $\mathscr{A}_\infty^\loc:=\bigcup_{p\ge 1}\mathscr{A}_p^\loc.$
}\end{definition} 
Suprema in \eqref{Mup'} and \eqref{Mu1'} are taken over all cubes $Q\subset\mathbb{R}^N$ with $|Q|\le 1$. Similarly to $\mathscr{A}_\infty$, the class $\mathscr{A}_\infty^\loc$ is stable with respect to translation, dilation and multiplication {by} a positive scalar.
Besides,\\ (\textbf{P}1) if $w\in\mathscr{A}_p^\loc$, $1<p<\infty$, then $w^{-p'/p}\in\mathscr{A}_{p'}^\loc$;\\
(\textbf{P}2) if $w\in\mathscr{A}_p^\loc$, $1<p\le\infty$, then there exists a constant $c_{w,N}>0$ such that $$w(Q_t)\le \exp(c_{w,N}\, t) w(Q)$$ holds for all cubes $Q$ with $|Q|=1$ and for their concentric cubes $Q_t$ 
with side lengths $l(Q)$ and $l(Q_t)$ satisfying $l(Q_t)=t\cdot l(Q)$, $t\ge 1$;\\ (\textbf{P}3) if $1<p_1<p_2\le\infty$ then $\mathscr{A}_{p_1}^\loc\subset \mathscr{A}_{p_2}^\loc$.

Definition \ref{defin}(iii) implies that if $\mathscr{A}_{\infty}^\loc$ then $w\in\mathscr{A}_{p}^\loc$ for some $p<\infty$. In consequence, for $w\in\mathscr{A}_\infty^\loc$ one can define a positive number  \begin{equation}\label{r_w}\mathbf{r}_0:= \inf\{r\ge 1\colon w\in\mathscr{A}_r^\loc\}<\infty, \qquad w\in\mathscr{A}_\infty^\loc,\end{equation} similarly to $r_0$ for $w\in \mathscr{A}_{\infty}$. There is {\it the basic property} of $\mathscr{A}_p^\loc-$weights with respect to $\mathscr{A}_p-$weights \cite[Lemma 1.1]{R}: any $w\in\mathscr{A}_p^\loc$ can be extended beyond a given cube $Q$ so that the extended function $\bar{w}$ belongs to $\mathscr{A}_p$. \begin{lemma} {\rm (\cite[Lemma 1.1]{R})}
Let $1\le p<\infty$, $w\in\mathscr{A}_p^\loc$ and $Q$ be a unit cube, i.e., $|Q|=1$. Then there exists a $\bar{w}\in\mathscr{A}_p$ so that $\bar{w}=w$ on $Q$ and, with a positive constant $c$ independent of $Q$, the following estimate is satisfied: $$\mathscr{A}_p(\bar{w})\le c \mathscr{A}_p^\loc(w).$$
\end{lemma}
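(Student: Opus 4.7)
\noindent\emph{Proof plan.} By the translation invariance of the classes $\mathscr{A}_p$ and $\mathscr{A}_p^\loc$ we may assume $Q=[0,1]^N$. The extension $\bar w$ is constructed by repeatedly reflecting $w|_Q$ across the hyperplanes $\{x_i=0\}$ and $\{x_i=1\}$, $i=1,\dots,N$, and propagating the result to $\mathbb{R}^N$ by $2\mathbb{Z}^N$-periodicity. The function $\bar w$ so obtained is locally integrable, positive almost everywhere, equals $w$ on $Q$, and is invariant under reflection across every hyperplane $\{x_i=k\}$, $k\in\mathbb{Z}$.

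To verify $\bar w\in\mathscr{A}_p$ with constant bounded by $c\,\mathscr{A}_p^\loc(w)$, I would fix an arbitrary cube $R\subset\mathbb{R}^N$ of side length $\ell$ and split into two cases. When $\ell>1$, the cube $R$ is covered by at most $(\lceil\ell\rceil+1)^N$ integer translates $k+Q$, $k\in\mathbb{Z}^N$; on each such translate $\bar w$ is a reflected copy of $w|_Q$, so $\int_{k+Q}\bar w=w(Q)$ and $\int_{k+Q}\bar w^{1-p'}=\int_Q w^{1-p'}$. Summing and dividing by $|R|=\ell^N$ yields $|R|^{-1}\int_R\bar w\lesssim w(Q)$ and $|R|^{-1}\int_R\bar w^{1-p'}\lesssim\int_Q w^{1-p'}$, with implicit constants depending only on $N$. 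Multiplying and recalling $|Q|=1$ bounds the $\mathscr{A}_p$ ratio on $R$ by a constant times the quantity in \eqref{Mup'} (or \eqref{Mu1'} when $p=1$), i.e., by $c_N\,\mathscr{A}_p^\loc(w)$.

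When $\ell\le 1$, the cube $R$ meets at most $2^N$ cells of the reflection tiling; write $R=\bigcup_{j=1}^K R_j$ accordingly, with $K\le 2^N$. Let $\widetilde R_j\subset Q$ be the image of $R_j$ under the composition of reflections carrying its containing cell onto $Q$. Since $\bar w$ is reflection-invariant, $\int_{R_j}\bar w=\int_{\widetilde R_j}w$ and analogously for $\bar w^{1-p'}$. A short geometric argument (see below) shows that $\bigcup_j\widetilde R_j$ sits inside a suitable corner cube $Q^*\subset Q$ of side $\ell$, so that $|Q^*|=|R|\le 1$. Consequently $\int_R\bar w\le 2^N\int_{Q^*}w$ and $\int_R\bar w^{1-p'}\le 2^N\int_{Q^*}w^{1-p'}$, and applying \eqref{Mup'} (or \eqref{Mu1'}) to $Q^*$ bounds the $\mathscr{A}_p$ ratio on $R$ by $4^N\mathscr{A}_p^\loc(w)$.

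The main obstacle is the small-cube geometric claim: that the reflected pieces $\widetilde R_j$ fit inside a single corner cube $Q^*\subset Q$ of the same volume as $R$. The key observation is that whenever $R$ straddles $\{x_i=1\}$, reflecting the outer portion across $\{x_i=1\}$ sends it into the strip $[1-\ell,1]$ in the $i$-th coordinate (since the $x_i$-extent of $R$ equals $\ell$), where the inner portion already lives; the symmetric statement holds at $\{x_i=0\}$. Iterating coordinate by coordinate selects an appropriate corner of $Q$ and yields $Q^*$. With this in hand the tracking of constants is routine, giving $\mathscr{A}_p(\bar w)\le c\,\mathscr{A}_p^\loc(w)$ with $c=c(N,p)$.
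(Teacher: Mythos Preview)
The paper does not prove this lemma; it merely quotes it from Rychkov \cite[Lemma~1.1]{R}. Your even-reflection/periodisation construction is exactly the device Rychkov uses, and your two-case verification (large cubes via counting unit cells, small cubes via folding back into a corner subcube of $Q$) is the standard way to check the $\mathscr{A}_p$ condition for the reflected weight. The geometric claim that the folded pieces $\widetilde R_j$ land in a single corner cube $Q^*\subset Q$ of side $\ell$ is correct for the reason you give: in each coordinate a cube of side $\ell\le 1$ crosses at most one integer hyperplane, and reflecting the outer slab across that hyperplane sends it into the same $\ell$-strip as the inner slab; mapping the containing cell to $Q$ carries this strip to one of the two corner intervals $[0,\ell]$ or $[1-\ell,1]$.

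One cosmetic remark: in the small-cube estimate your stated constant $4^N$ is only the case $p=2$. In general the bound picks up a factor $2^N$ on $\int_R\bar w$ and another $2^N$ on $\int_R\bar w^{\,1-p'}$, so the $\mathscr{A}_p$ quotient is controlled by $2^{N(1+p/p')}=2^{Np}$ times $\mathscr{A}_p^\loc(w)$. Since you conclude with $c=c(N,p)$ this does not affect the argument.
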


As an example of a weight, which is in $\mathscr{A}_\infty^\loc$ but is not in $\mathscr{A}_\infty$ and is not locally regular, one can take 
$$\mathscr{A}_p^\loc\ni w(x)=\begin{cases}|x|^\alpha, & |x|\le 1,\\
\exp(|x|-1), & |x|>1,\end{cases}\quad\textrm{where}\quad \begin{cases} -N<\alpha<N(p-1), & p>1,\\ -N<\alpha\le 0, &p=1.\end{cases}$$ 

\noindent More information and examples of local Muckenhoupt weights can be found in \cite{R,Ma,WM}.

\subsection{Function spaces} For $0<p<\infty$ and a weight $w$ on $\mathbb{R}^N$ we denote $L_p(\mathbb{R}^N,w)$ the weighted Lebesgue space quasi--normed by $\|f\|_{L_p(\mathbb{R}^N,w)}:=\|w^{1/p}f\|_{L_p(\mathbb{R}^N)}$ with usual modification if $p=\infty$.

For the definitions of the unweighted Besov $B_{pq}^s(\mathbb{R}^N)$ and Triebel--Lizorkin $F_{pq}^s(\mathbb{R}^N)$ spaces we refer to \cite{Tr1, Tr2}. 
Their weighted counterparts can be introduced in several ways \cite{Sch}. The space $\mathscr{S}'(\mathbb{R}^N)$ of tempered distributions suits well for the case of Muckenhoupt weights, (see \S~\ref{S}~or e.g. \cite{HTr2, HSc, Tr1}), but it is not sufficient when dealing with the local Muckenhoupt weight class (see \S~\ref{E}~for details). 
\subsubsection{Function spaces $B_{p,q}^s(\mathbb{R}^N,w)$ and $F_{p,q}^s(\mathbb{R}^N,w)$ with $w\in\mathscr{W}^N\cup\mathscr{A}_\infty$}\label{S} 
Let 
$\mathscr{S}(\mathbb{R}^N)$ be the Schwartz space of all complex--valued rapidly decreasing, infinitely differentiable functions on $\mathbb{R}^N$. By $\mathscr{S}'(\mathbb{R}^N)$ we denote its topological dual, the space of tempered distributions. If $\varphi\in\mathscr{S}(\mathbb{R}^N)$ then 
\begin{equation} \label{FourierS}
\widehat{\varphi}(\xi)=(F\varphi)(\xi)=(2\pi)^{-N/2}\int_{\mathbb{R}^N}\mathrm{e}^{-i\xi x}\varphi(x)\,dx, \qquad \xi\in\mathbb{R}^N,
\end{equation}
the Fourier transform of $\varphi$, in addition, $\widehat{\varphi}\in\mathscr{S}(\mathbb{R}^N)$. Symbol $F^{-1}\varphi$ stands for the inverse Fourier transform given by the right--hand side of \eqref{FourierS} with $i$ in place of $-i$. Both $F$ and $F^{-1}$ are extended to $\mathscr{S}'(\mathbb{R}^N)$ in the standard way.

Let $\varphi_0=\varphi\in\mathscr{S}(\mathbb{R}^N)$ be such that 
\begin{equation*}\mathrm{supp}\,\varphi\subset \{y\in\mathbb{R}^N\colon |y|<2\}\qquad \textrm{and}\quad \varphi(x)=1 \quad \textrm{ if }\quad |x|\le 1,
\end{equation*}
and for $d\in\mathbb{N}$ let $\varphi_d(x)=\varphi(2^{-d}x)-\varphi(2^{-d+1}x)$.  
Then $\{\varphi_d\}_{d=0}^\infty$ is a smooth dyadic resolution of unity. If $f\in\mathscr{S}'(\mathbb{R}^N)$ then the compact support of $\varphi_d\widehat{f}$ implies, by the Paley--Wiener--Schwartz theorem, that $F^{-1}(\varphi_d\widehat{f})$ is an entire analytic function on $\mathbb{R}^N$.

\begin{definition}\label{SDef2}{\rm
Let $0<p<\infty$, $0<q\le\infty$, $-\infty<s<+\infty$ and $\{\varphi_d\}_{d\in\mathbb{N}_0}$ be a smooth dyadic resolution of unity. Assume that $w\in\mathscr{W}^N\cup\mathscr{A}_\infty$.\\
(i) The weighted Besov space $B_{pq}^s(\mathbb{R}^N, w)$ is the collection of all distributions $f \in \mathscr{S}'(\mathbb{R}^N)$ with finite (quasi--)norm
\begin{equation}\label{BspqS}
  \|f\|_{B_{pq}^s(\mathbb{R}^N, w)}: =\Biggl(\sum_{d=0}^\infty 2^{dsq}\Bigl\| F^{-1}(\varphi_d\widehat{f})\Bigr\|_{L_p(\mathbb{R}^N,w)}^q\Biggr)^{\frac{1}{q}} 
\end{equation}
(with the usual modification if $q=\infty$).\\
 (ii) The weighted Triebel--Lizorkin space $F_{pq}^s(\mathbb{R}^N, w)$ consists of all $f \in \mathscr{S}'(\mathbb{R}^N)$ satisfying the condition
\begin{equation}\label{FspqS}
  \|f\|_{F_{pq}^s(\mathbb{R}^N, w)} =\Biggl\|\biggl(\sum_{d=0}^\infty 2^{dsq} \bigl|F^{-1}(\varphi_d\widehat{f})(\cdot)\bigr|^q\biggr)^{\frac{1}{q}} \Biggr\|_{L_p(\mathbb{R}^N,w)}<\infty
\end{equation}
(with the usual modification in the right hand side of \eqref{FspqS} for $q=\infty$).
}\end{definition}

Definitions of the above spaces are independent of the choice of $\varphi$, up to equivalence of quasi--norms. To simplify the notation we write $A^{s}_{pq}(\mathbb{R}^N,w)$ instead of $B_{pq}^{s}(\mathbb{R}^N,w)$ or $F_{pq}^{s}(\mathbb{R}^N,w)$.

If $w=1$ then we have the unweighted Besov $B_{pq}^s(\mathbb{R}^N)$ and Triebel--Lizorkin $F_{pq}^s(\mathbb{R}^N)$ spaces defined by \eqref{BspqS} and \eqref{FspqS} with $L_p(\mathbb{R}^N)$ instead of $L_p(\mathbb{R}^N,w)$. The theory and properties of $B_{pq}^s(\mathbb{R}^N, w)$ and $F_{pq}^s(\mathbb{R}^N, w)$ can be found in \cite{HTr1, HTr1',EdTr}.

\subsubsection{Function spaces $B_{pq}^{s,w}(\mathbb{R}^N)$ and $F_{pq}^{s,w}(\mathbb{R}^N)$ with $w\in\mathscr{A}_\infty^\loc$}\label{E} Let $\mathscr{D}(\mathbb{R}^N)$ denote the space of
all compactly supported $C^\infty(\mathbb{R}^N)$ functions equipped with the usual topology. To incorporate the $\mathscr{A}_\infty^\loc-$weights into the theory of weighted function spaces V.S. Rychkov exploited a class $\mathscr{S}'_e(\mathbb{R}^N)$ of distributions generalising $\mathscr{S}'(\mathbb{R}^N)$ (see \cite{R} and works \cite{Sch,Sch'} by Th. Schott where the class $\mathscr{S}_e(\mathbb{R}^N)$ was introduced). 
\begin{definition}{\rm 
The space
$\mathscr{S}_e(\mathbb{R}^N)$ consists of all
$C^\infty(\mathbb{R}^N)$ functions $\boldsymbol{\varphi}$ satisfying $$\boldsymbol{q}_\mathbf{N}(\boldsymbol{\varphi}):=\sup_{x\in\mathbb{R}^N}\mathrm{e}^{\mathbf{N}|x|}\sum_{|\gamma|\le \mathbf{N}}|D^\gamma\boldsymbol{\varphi}(x)|<\infty\qquad\textrm{for all}\quad\mathbf {N}\in\mathbb{N}_0.$$ }\end{definition} 
The set
$\mathscr{S}_e(\mathbb{R}^N)$ is equipped with the locally convex topology defined by the system of the semi--norms $\boldsymbol{q}_\mathbf{N}$. The following properties take place (see \cite{Sch}, \cite{R} and \cite[\S~3]{Ma}):\\ (i) $\mathscr{S}_e(\mathbb{R}^N)$ is a complete locally convex space;\\ (ii) $\mathscr{D}(\mathbb{R}^N)\hookrightarrow\mathscr{S}_e(\mathbb{R}^N) \hookrightarrow \mathscr{S}(\mathbb{R}^N)$;\\(iii) the space $\mathscr{D}(\mathbb{R}^N)$ is dense in $\mathscr{S}_e(\mathbb{R}^N)$; the space $\mathscr{S}_e(\mathbb{R}^N)$ is dense in $\mathscr{S}(\mathbb{R}^N)$;\\ (iv) if $w\in \mathscr{A}_\infty^\loc$ then $\mathscr{S}_e(\mathbb{R}^N)\hookrightarrow L_p(\mathbb{R}^N,w)$ for any $0<p<\infty$.

By $\mathscr{S}'_e(\mathbb{R}^N)$ we denote the strong dual space of  $\mathscr{S}_e(\mathbb{R}^N)$. The class $\mathscr{S}'_e(\mathbb{R}^N)$ can be identified with a subset of the collection $\mathscr{D}'(\mathbb{R}^N)$ of all distributions $f$ on $\mathscr{D}(\mathbb{R}^N)$ for which the estimate $$|\langle f,\boldsymbol{\varphi}\rangle|\le C\sup\Bigl\{\bigl|D^\gamma \boldsymbol{\varphi}(x)\bigr|\mathrm{e}^{\mathbf{N}|x|}\colon x\in\mathbb{R}^N,\,|\gamma|\le \mathbf{N}\Bigr\}$$ holds for all $\boldsymbol{\varphi}\in\mathscr{D}(\mathbb{R}^N)$ with some constants $C$ and $\mathbf{N}$ depending on $f$. Such a distribution $f$ can be extended to a continuous functional on $\mathscr{S}_e(\mathbb{R}^N)$.

Contrary to the situation with $\mathscr{S}(\mathbb{R}^N)$ and $\mathscr{S}'(\mathbb{R}^N)$, the convolution transform, instead of the Fourier one, is justifiably used when dealing with $\mathscr{S}_e(\mathbb{R}^N)$ and $\mathscr{S}'_e(\mathbb{R}^N)$. Indeed, for any functions $\boldsymbol{\varphi},\boldsymbol{\psi}\in\mathscr{S}_e(\mathbb{R}^N)$ their convolution $$\boldsymbol{\varphi}\ast\boldsymbol{\psi}(x)=:\int_{\mathbb{R}^N}\boldsymbol{\varphi}(x-y)\boldsymbol{\psi}(y)\,dy,\quad x\in\mathbb{R}^N,$$
belongs to $\mathscr{S}_e(\mathbb{R}^N)$. Besides, for $f\in\mathscr{S}'_e(\mathbb{R}^N)$ and $\boldsymbol{\varphi}\in\mathscr{S}_e(\mathbb{R}^N)$ the related convolution $$f\ast\boldsymbol{\varphi}(x)=:\langle f(\cdot),\boldsymbol{\varphi}(x-\cdot)\rangle,\quad x\in\mathbb{R}^N,$$
is a function from $C^\infty(\mathbb{R}^N)$ of at most exponential growth.
 
To be able to define the required spaces we take a function $\boldsymbol{\varphi}_0\in\mathscr{D}(\mathbb{R}^N)$ such that \begin{equation}\label{Iphi1}\int_{\mathbb{R}^N}\boldsymbol{\varphi}_0(x)\,dx\not=0,\end{equation} put 
\begin{equation}\label{Iphidef}\boldsymbol{\varphi}(x)=\boldsymbol{\varphi}_0(x)-2^{-N}\boldsymbol{\varphi}_0(x/2)\end{equation} and let $\boldsymbol{\varphi}_d(x):=2^{(d-1)N}\boldsymbol{\varphi}(2^{d-1}x)$ for $d\in\mathbb{N}$. One can find $\boldsymbol{\varphi}_0$ such that \begin{equation}\label{Iphi0}\int_{\mathbb{R}^N}x^\gamma\boldsymbol{\varphi}(x)\,dx=0\end{equation} for any multi--index $\gamma\in\mathbb{N}^N_0$, $|\gamma|\le \Gamma$, where $\Gamma\in\mathbb{N}$ is fixed. We write $\Gamma=-1$ if \eqref{Iphi0} does not hold.

\begin{definition}\label{Def2}{\rm (\cite{R}) 
Let $0<p<\infty$, $0<q\le\infty$, $-\infty<s<+\infty$ and $w\in\mathscr{A}_\infty^\loc$. Let a function $\boldsymbol{\varphi}_0\in\mathscr{D}(\mathbb{R}^N)$ satisfy \eqref{Iphi1} and $\boldsymbol{\varphi}$ of the form \eqref{Iphidef} satisfy \eqref{Iphi0} with $|\gamma|\le \Gamma$, where $\Gamma\ge [s]$. We define\\
(i) weighted Besov space $B_{pq}^{s,w}(\mathbb{R}^N)$ to be the set of all $f \in \mathscr{S}'_e(\mathbb{R}^N)$ such that the quasi--norm
\begin{equation*}\label{Bspq}
  \|f\|_{B_{pq}^{s,w}(\mathbb{R}^N)}: =\biggl(\sum_{d=0}^\infty 2^{dsq}\bigl\| \boldsymbol{\varphi}_d\ast f\bigr\|_{L_p(\mathbb{R}^N,w)}^q\biggr)^{\frac{1}{q}} 
\end{equation*}
(with the usual modification if $q=\infty$) is finite; and\\
(ii) weighted Triebel--Lizorkin space $F_{pq}^{s,w}(\mathbb{R}^N)$ as the collection of all $f \in \mathscr{S}'_e(\mathbb{R}^N)$ with finite quasi--norm
\begin{equation*}\label{Fspq}
  \|f\|_{F_{pq}^{s,w}(\mathbb{R}^N)} =\biggl\|\biggl(\sum_{d=0}^\infty 2^{dsq} \bigl|\boldsymbol{\varphi}_d\ast f\bigr|^q\biggr)^{\frac{1}{q}} \biggr\|_{L_p(\mathbb{R}^N,w)}
\end{equation*}
admitting the standard modification for $q=\infty$.
}
\end{definition}

To simplify the notation we write $A^{s,w}_{pq}(\mathbb{R}^N)$ instead of $B_{pq}^{s,w}(\mathbb{R}^N)$ or $F_{pq}^{s,w}(\mathbb{R}^N)$.
The definitions of the above spaces are independent of the choice of $\boldsymbol{\varphi}_0$, up to equivalence of quasi--norms. 
Definition \ref{Def2} covers Definition \ref{SDef2} of $B_{pq}^s(\mathbb{R}^N, w)$ and $F_{pq}^s(\mathbb{R}^N, w)$ with Muckenhoupt and admissible weights $w$, and Besov and Triebel--Lizorkin spaces with locally regular weights \cite[\S~2.2 and Remark 2.23]{R}.

The spaces $A^{s,w}_{pq}(\mathbb{R}^N)$ have properties similar to the unweighted $B^{s}_{pq}(\mathbb{R}^N)$ and $F^{s}_{pq}(\mathbb{R}^N)$ and spaces $A^{s}_{pq}(\mathbb{R}^N,w)$ with $w\in\mathscr{W}^N\cup\mathscr{A}_\infty$ (see \cite{IS,Ma,R,WBan,WM} for details).

\section{Spline wavelet systems with localisation properties}\label{SWS}

Consider a class of orthonormal spline wavelet systems of Battle--Lemari\'{e} type, which can be obtained by orthogonalisation process of (basic) $B-$splines. Detailed constructions of wavelet systems of such a type were established in \cite{JMAA, RMC} inspired by the idea taken from \cite{NS,NS1} (see also \cite{NPS}). To remind them briefly in \S~\ref{dusya} let us recall some fundamentals. 

Put $B_0=\chi_{[0,1)}$ and define \begin{equation}\label{Bndef}
B_n(x):=(B_{n-1}\ast B_0)(x)=\int_0^1 B_{n-1}(x-t)\,dt, \quad n\in\mathbb{N}.
\end{equation} Each $B-$spline $B_n$ of order $n$ is continuous and $n-$times a.e. differentiable function on $\mathbb{R}$ {with ${\rm supp}\,B_n=[0,n+1]$. In addition, $B_n(x)>0$ for all $x\in(0,n+1)$ and the restriction of $B_n$ to each $[m,m+1]$,} $m=0,\ldots,n$, is a polynomial of degree $n$. The function $B_n(x)$ is symmetric about $x=(n+1)/2$ (see e.g. \cite{Chui}).

$B$--splines satisfy the following differentiation property
\begin{equation}\label{diff}
B'_n(\cdot)=B_{n-1}(\cdot)-B_{n-1}(\cdot-1)\quad\textrm{almost everywhere (a.e.) on }\mathbb{R},\end{equation} 
or, in its generalised form,
\begin{equation}\label{diff'}
B^{(k)}_{n}(\cdot)=\sum_{\nu=0}^k \frac{(-1)^\nu k!}{\nu!(k-\nu)!}B_{n-k}(\cdot-\nu),\quad\quad n\ge k\in\mathbb{N}.\end{equation}

Orthogonalisation of $B$--splines of the form \eqref{Bndef} results in functions $\phi^{BL}_n\in L_2(\mathbb{R})$, named after G. Battle \cite{B,B1} and P.G. Lemari\'{e}--Rieusset \cite{L}. A function $\phi^{BL}_n$ satisfying
\begin{equation*}\label{BLphi}\hat{\phi}^{BL}_n(\omega)=\hat{B}_n(\omega)\left(\sum_{m\in\mathbb{Z}}\left|\hat{B}_n(\omega+2\pi m)\right|^2\right)^{-1/2}\end{equation*} is called the Battle--Lemari\'{e} scaling function of order $n$. The $n-$th order Battle--Lemari\'{e} wavelet related to $\phi^{BL}_n$ is a function $\psi^{BL}_n$ whose Fourier transform is
\begin{equation*}\label{BLpsi}\hat{\psi}^{BL}_n(\omega)=-\mathrm{e}^{-i\omega/2}\frac{\overline{\hat{\phi}^{BL}_n(\omega+2\pi)}}
{\overline{\hat{\phi}^{BL}_n(\omega/2+\pi)}}{\hat{\phi}^{BL}_n(\omega/2)}.\end{equation*}

\label{MRA}
{For any fixed} $\boldsymbol{k}\in\mathbb{Z}$ the system $\bigl\{{\phi}^{BL}_{n,\boldsymbol{k}}(\cdot-\tau):={\phi}^{BL}_n(\cdot-\boldsymbol{k}-\tau)\colon \tau\in\mathbb{Z}\bigr\}$ generates multiresolution analysis $\mathrm{MRA}_{{\phi}^{BL}_{n,\boldsymbol{k}}}$ of $L_2(\mathbb{R})$, and $\mathrm{MRA}_{{\phi}^{BL}_{n,\boldsymbol{k}}}=\mathrm{MRA}_{{\phi}^{BL}_n}$. Moreover, $\mathrm{MRA}_{{\phi}^{BL}_{n}}$ $=\mathrm{MRA}_{{B}_n}=\mathrm{MRA}_{{B}_{n,\boldsymbol{k}}}$. 
In more detail, let $\mathscr{V}_d$, $d\in\mathbb{Z}$, denote the $L_2(\mathbb{R})$ closure of the linear span of the system $\bigl\{B_{n}(2^d\cdot-\tau)\colon \tau\in\mathbb{Z}\bigr\}$. The spline spaces $\mathscr{V}_d$, $d\in\mathbb{Z}$, are generated by $B_n$ and constitute $\mathrm{MRA}_{B_{n}}$ of $L_2(\mathbb{R})$ in the sense that \begin{itemize}
\item[{\rm (i)}] $\ldots\subset \mathscr{V}_{-1}\subset \mathscr{V}_0\subset \mathscr{V}_1\subset\ldots$;
\item[{\rm (ii)}] $\mathrm{clos}_{L_2(\mathbb{R})}\Bigl(\bigcup_{d\in\mathbb{Z}} \mathscr{V}_d\Bigr)=L_2(\mathbb{R})$;
\item[{\rm (iii)}] $\bigcap_{d\in\mathbb{Z}} \mathscr{V}_d=\{0\}$;
\item[{\rm (iv)}] for each $d$ the system $\bigl\{B_{n}(2^d\cdot-\tau)\colon\tau\in\mathbb{Z}\bigr\}$ is an unconditional basis of $\mathscr{V}_d$.\end{itemize} Observe that $\bigl\{B_{n}(2^d\cdot-\tau)\colon\tau\in\mathbb{Z}\bigr\}$ is not orthonormal.
Further, there are the orthogonal complementary subspaces $\ldots, \mathscr{W}_{-1},\mathscr{W}_0,\mathscr{W}_1,\ldots$ such that 
\begin{itemize}
\item[{\rm (v)}] $\mathscr{V}_{d+1}=\mathscr{V}_d\oplus \mathscr{W}_d$ for all $d\in\mathbb{Z}$,
where $\oplus$ stands for $\mathscr{V}_d\perp \mathscr{W}_d$ and $\mathscr{V}_{d+1}=\mathscr{V}_d+\mathscr{W}_d$. 
\end{itemize} Wavelet subspaces $\mathscr{W}_d$, $d\in\mathbb{Z}$, related to the spline $B_n$, are also generated by some basis functions ({\it wavelets}) in the same manner as the spline spaces $\mathscr{V}_d$, $d\in\mathbb{Z}$, are generated by the scaling function $B_n$.

The scaling function $B_n$ and one of its associated wavelets form a wavelet system generating a Riesz basis of $L_2(\mathbb{R})$. Unfortunately, not all the elements of such a basis are orthogonal to each other \cite{Chui,W}.  In contrast, Battle--Lemari\'{e} systems $\{{\phi}^{BL}_n,{\psi}^{BL}_n\}$ constitute orthonormal bases of $L_2(\mathbb{R})$ within $\mathrm{MRA}_{{B}_n}$.

\subsection{Battle--Lemari\'{e} wavelet systems of natural orders} \label{dusya}
Constructions of the required spline wavelet systems $\{\phi^{BL}_n, \psi^{BL}_n\}$, in relativity explicit forms, were established in \cite[\S\,2.2]{JMAA} and \cite[\S\,3]{RMC}.

Fix $n\in\mathbb{N}$. For each $j=1,\ldots, n$ we define $r_{j}(n):=(2\alpha_j(n)-1)-2\sqrt{\alpha_j(n)(\alpha_j(n)-1)}$ with some particular $\alpha_{j}(n)>1$. Then $r_{j}(n)\in(0,1)$ for all $j=1,\ldots,n$.  Put $\beta_n:=2^n\sqrt{\alpha_1(n)\,r_1(n)\ldots\alpha_n(n)\,r_n(n)}$ and define the $n-$th order Battle--Lemari\'{e} scaling function $\phi_{n,\boldsymbol{k}}$ via its Fourier transform as follows:
\begin{equation}\label{phi_nn}
\hat{\phi}_{n,\boldsymbol{k}}(\omega):={\beta_n\,\hat{B}_{n,\boldsymbol{k}}(\omega)}{\mathbf{A}_n^{-1}(\omega)}=m_{n,\boldsymbol{k}}(\omega/2)\,\hat{\phi}_{n,\boldsymbol{k}}(\omega/2),\end{equation}
where $\mathbf{A}_n(\omega):=\bigl(1+\mathrm{e}^{i\omega}r_1(n)\bigr)\ldots\bigl
(1+\mathrm{e}^{i\omega}r_n(n)\bigr)$, $$m_{n,\boldsymbol{k}}(\omega):=
\mathrm{e}^{-i(n+1)\omega/2}(\cos(\omega/2))^{n+1}\frac{\mathbf{A}_n(\omega)}{\mathbf{A}_n(2\omega)}\,\mathrm{e}^{-i\omega\boldsymbol{k}}.$$ Here the parameter $\boldsymbol{k}\subset\mathbb{Z}$ is fixed. It allows to start with ${B}_{n,\boldsymbol{k}}$ centred at $(n+1)/2+\boldsymbol{k}$.
Since 
$$\sum_{m\in\mathbb{Z}}\Bigl|\hat{\phi}_{n,\boldsymbol{k}}(\omega+2\pi m)\Bigr|^2=
\beta_n^2\bigl|\mathbf{A}_n(\omega)\bigr|^{-2}\sum_{m\in\mathbb{Z}}\Bigl|\hat{B}_{n,\boldsymbol{k}}(\omega+2\pi m)\Bigr|^2$$ and (see \cite[\S\,2]{JMAA}) $$\mathbb{P}_{n,\boldsymbol{k}}(\omega):=\sum_{m\in\mathbb{Z}}\Bigl|\hat{B}_{n,\boldsymbol{k}}(\omega+2\pi m)\Bigr|^2=\sum_{m\in\mathbb{Z}}\Bigl|\hat{B}_n(\omega+2\pi m)\Bigr|^2=\frac{1}{\beta_n^2}\bigl|\mathbf{A}_n(\omega)\bigr|^{2},$$ then
$$\sum_{m\in\mathbb{Z}}\Bigl|\hat{\phi}_{n,\boldsymbol{k}}(\omega+2\pi m)\Bigr|^2=
1.$$ Therefore, {for fixed} $\boldsymbol{k}\in\mathbb{Z}$ the system $\{\phi_{n,\boldsymbol{k}}(\cdot-\tau)\colon \tau\in\mathbb{Z}\}$ forms an orthonormal basis in $\mathscr{V}_0$ of $\mathrm{MRA}_{{B}_n}$.

Since \begin{equation}\label{ryadok}({\mathrm{e}^{\pm i\omega}r+1})^{-1}=\sum_{l=0}^\infty \left(-r\,\mathrm{e}^{\pm i\omega}\right)^l,\qquad0<r<1,\end{equation} it follows from \eqref{phi_nn}, that
\begin{equation*}\label{ex2}\hat{\phi}_{n,\boldsymbol{k}}(\omega)=\frac{\beta_n\ \hat{B}_{n,\boldsymbol{k}}(\omega)}{\prod_{j=1}^n \bigl(1+\mathrm{e}^{i\omega}r_j(n)\bigr)}=
\beta_n \prod_{j=1}^n\sum_{l_j=0}^\infty \bigl(-r_j(n)\,\mathrm{e}^{i\omega}\bigr)^{l_j}\,\hat{B}_{n,\boldsymbol{k}}(\omega),\end{equation*} that is, 
\begin{equation*}
{\phi}_{n,\boldsymbol{k}}(x)=\beta_n\sum_{l_1\ge 0}\bigl(-r_1(n)\bigr)^{l_1}\ldots\sum_{l_n\ge 0}\bigl(-r_n(n)\bigr)^{l_n}B_{n,\boldsymbol{k}}\bigl(x+l_1+\ldots+ l_n\bigr).\end{equation*}
In particular, \begin{equation*}
\phi_{1,\boldsymbol{k}}(x)=\beta_1\sum_{l\ge 0}\bigl(-r_1(1)\bigr)^l B_{1,\boldsymbol{k}}(x+l),
\qquad
B_1(x)=\begin{cases} x, & \quad\textrm{if}\quad 0\le x<1,\\ 2-x, & \quad\textrm{if}\quad 1\le x<2,\\ 0, & \quad\textrm{otherwise};\end{cases}\end{equation*} 
\begin{multline*}
\phi_{2,\boldsymbol{k}}(x)=\beta_2
\sum_{l_1=0}^\infty \bigl(-r_1(2)\bigr)^{l_1}
\sum_{l_2=0}^\infty \bigl(-r_2(2)\bigr)^{l_2}B_{2,\boldsymbol{k}}(x+ l_1+ l_2)
,\\
B_2(x)=\begin{cases}
x^2/2, & 0\le x<1,\\
-x^2+3x-{3}/{2}, & 1\le x<2,\\
x^2/2-3x+{9}/{2}, & 2\le x<3,\\ 0 & \textrm{otherwise}.
\end{cases}\end{multline*}

The Fourier transform of a wavelet function ${\psi}_{n,\boldsymbol{k},\boldsymbol{s}}$ related to the $\phi_{n,\boldsymbol{k}}$ must satisfy the condition
\begin{equation*}
\hat{\psi}_{n,\boldsymbol{k},\boldsymbol{s}}(\omega)={M}_{n,\boldsymbol{k},\boldsymbol{s}}(\omega/2)\hat{\phi}_{n,\boldsymbol{k}}(\omega/2)\qquad(\boldsymbol{s}\in\mathbb{Z}),\end{equation*} where we can put (see \cite[\S~2]{JMAA} for details)
\begin{equation*}
{M}_{n,\boldsymbol{k},\boldsymbol{s}}(\omega):=\mathrm{e}^{-i\omega}\,\overline{m_{n,\boldsymbol{k}}(w+\pi)}\,\mathrm{e}^{-2i\omega\boldsymbol{s}}.\end{equation*} Similarly to the situation with $\boldsymbol{k}$, the parameter $\boldsymbol{s}\in\mathbb{Z}$ is also fixed here. It makes possible the positioning  ${\psi}_{n,\boldsymbol{k},\boldsymbol{s}}$ at a particular point on $\mathbb{R}$ (see e.g. \cite[p. 25]{RMC}). 

Denote $\mathscr{A}_n(\omega):=\overline{\mathbf{A}_n(\omega+\pi)}=\bigl(1-\mathrm{e}^{-i\omega}r_1(n)\bigr)\ldots\bigl
(1-\mathrm{e}^{-i\omega}r_n(n)\bigr)$. 
Since
$$m_{n,\boldsymbol{k}}(\omega+\pi)=
\mathrm{e}^{-i(n+1)(\omega+\pi)/2}(\mathrm{e}^{-i\pi}\sin(\omega/2))^{n+1}
\frac{\mathscr{A}_n(-\omega)}{\mathbf{A}_n(2\omega)}\,\mathrm{e}^{-i(\omega+\pi)\boldsymbol{k}},$$  
then
\begin{align*}
{M}_{n,\boldsymbol{k},\boldsymbol{s}}(\omega)=&\mathrm{e}^{-i\omega}\mathrm{e}^{i(n+1)(\omega+\pi)/2}(\mathrm{e}^{i\pi}\sin(\omega/2))^{n+1}
\frac{\mathscr{A}_n(\omega)}{\mathbf{A}_n(-2\omega)}\,\mathrm{e}^{i(\omega+\pi)\boldsymbol{k}}\,\mathrm{e}^{-2i\omega\boldsymbol{s}}\nonumber\\
=&\mathrm{e}^{i\pi(n+1+\boldsymbol{k})}\mathrm{e}^{-i\omega}\left(\frac{\mathrm{e}^{i\omega}-1}{2}\right)^{n+1}
\frac{\mathscr{A}_n(\omega)}{\mathbf{A}_n(-2\omega)}\,\mathrm{e}^{i\omega\boldsymbol{k}}\,\mathrm{e}^{-2i\omega\boldsymbol{s}}.\end{align*} Therefore,
\begin{equation}\label{Npsi_n}\hat{\psi}_{n,\boldsymbol{k},\boldsymbol{s}}(\omega)=\frac{\beta_n\,\mathrm{e}^{-i\omega\boldsymbol{s}}\,\mathrm{e}^{-i\omega/2}}{2^{n+1}\,\mathrm{e}^{i\pi(n+1+\boldsymbol{k})}}\,
\frac{\mathscr{A}_n(\omega/2)\,\bigl({\mathrm{e}^{i\omega/2}-1}\bigr)^{n+1}}{\mathbf{A}_n(-\omega)\mathbf{A}_n(\omega/2)}\,\hat{B}_{n}(\omega/2).\end{equation}
Observe that, the pre--image of 
$$\mathrm{e}^{-i\omega/2}\bigl(\mathrm{e}^{i\omega/2}-1\bigr)^{n+1}=\sum_{k=0}^{n+1}\frac{(-1)^k(n+1)!}{k!(n+1-k)!}\mathrm{e}^{(n-k)i\omega/2}\,\hat{B}_{n}(\omega/2)$$   
is exactly
$$
\sum_{k=0}^{n+1}\frac{(-1)^k(n+1)!}{k!(n+1-k)!}\ {B}_n(2\cdot +(n-k)),$$ which is equal to the {$(n+1)-$st} order derivative of $2^{-n-1}B_{2n+1}(2x+n)$.

Multiplying the numerator and denominator in \eqref{Npsi_n} by $\mathscr{A}_n(-\omega/2)$ we obtain
\begin{equation}\label{Npsi_nL}\hat{\psi}_{n,\boldsymbol{k},\boldsymbol{s}}(\omega)=\frac{\beta_n\,\mathrm{e}^{-i\omega\boldsymbol{s}}\,\mathrm{e}^{-i\omega/2}}{2^{n+1}\,\mathrm{e}^{i\pi(n+1+\boldsymbol{k})}}\,
\frac{\bigl|\mathscr{A}_n(\pm\omega/2)\bigr|^2\,\bigl({\mathrm{e}^{i\omega/2}-1}\bigr)^{n+1}}{\mathbf{A}_n(-\omega)\mathcal{A}_n(\omega)}\,\hat{B}_{n}(\omega/2)\end{equation}
with $\mathcal{A}_n(\omega):=\mathbf{A}_n(\omega/2)\mathscr{A}_n(-\omega/2)=\bigl(1-\mathrm{e}^{i\omega}r_1^2(n)\bigr)\ldots\bigl
(1-\mathrm{e}^{i\omega}r_n^2(n)\bigr)$. 

Denote $\rho_j(n)=r_j(n)+1/r_j(n)$, $j=1,\ldots,n$. Since
\begin{equation*}\label{modul}
[1-\mathrm{e}^{- i\omega/2}r]\,[1-\mathrm{e}^{i\omega/2}r]=|1-\mathrm{e}^{\pm i\omega/2}r|^2=r\Bigl[\bigl(r+1/r\bigr)-\bigl(\mathrm{e}^{i\omega/2}+\mathrm{e}^{-i\omega/2}\bigr)\Bigr]\qquad(r>0),
\end{equation*} then
\begin{equation*}
\bigl|\mathscr{A}_n(\pm\omega/2)\bigr|^2=\bigl[r_1(n)\ldots r_n(n)\bigr]\sum_{j=0}^n(-1)^j\lambda_j(n)\cos(j\omega/2),\end{equation*} where $\lambda_n(n)=2$, $0<\lambda_j(n)=\lambda_j(\rho_1(n),\ldots,\rho_n(n))$ for $j\not=n$ and $\lambda_j(n)$ is even for all $j\not=0$.
From here and \eqref{Npsi_nL}, we obtain on the strength of \eqref{ryadok}:
\begin{multline}\label{Npsihat}\hat{\psi}_{n,\boldsymbol{k},\boldsymbol{s}}(w)=\frac{\bigl[r_1(n)\ldots r_n(n)\bigr]\beta_n}{2^{n+1}\cdot(-1)^{n+1+\boldsymbol{k}}}\biggl\{\sum_{j=0}^n\frac{\lambda_j(n)}{2(-1)^j}\bigl[\mathrm{e}^{ji\omega/2}+
\mathrm{e}^{-ji\omega/2}\bigr]\biggr\}\sum_{k=0}^{n+1}\frac{(-1)^k(n+1)!}{k!(n+1-k)!}\mathrm{e}^{(n-k)i\omega/2}\\
\times \prod_{j=1}^n\sum_{m_j=0}^\infty \bigl(-r_j(n)\,\mathrm{e}^{- i\omega}\bigr)^{m_j}\sum_{l_j=0}^\infty \bigl(r_j^2(n)\,\mathrm{e}^{ i\omega}\bigr)^{l_j}\ \hat{B}_{n}(\omega/2)\ \mathrm{e}^{ -i\omega\boldsymbol{s}}.\end{multline} In particular, with $\gamma_{n,\boldsymbol{k}}:=
[r_1(n)\ldots r_n(n)]\beta_n2^{-n}(-1)^{n+1+\boldsymbol{k}}$ and
$B_{n,\boldsymbol{s}}(2x):=B_{n}(2(x-\boldsymbol{s}))$:
\begin{multline*}\label{Stpsi}
{\psi}_{1,\boldsymbol{k},\boldsymbol{s}}(x)=
{\gamma_{1,\boldsymbol{k}}}\sum_{m\ge 0}\bigl(-r_1(1)\bigr)^{m}\sum_{l\ge 0}r_1^{2l}(1)\Bigl[-B_{1,\boldsymbol{s}}\bigl(2x-2m+2l+2\bigr)
+\bigl(\rho_1(1)+2\bigr)B_{1,\boldsymbol{s}}\bigl(2x-2m+2l+1\bigr)\\-2\bigl(\rho_1(1)+1\bigr)B_{1,\boldsymbol{s}}\bigl(2x-2m+2l\bigr)+\bigl(\rho_1(1)+2\bigr)B_{1,\boldsymbol{s}}\bigl(2x-2m+2l-1\bigr)-B_{1,\boldsymbol{s}}\bigl(2x-2m+2l-2\bigr)\Bigr]
\\=\frac{\gamma_{1,\boldsymbol{k}}}{4}
\sum_{m\ge 0}\bigl(-r_1(1)\bigr)^{m}\sum_{l\ge 0}r_1^{2l}(1)\Bigl[-B_{3,\boldsymbol{s}}^{''}\bigl(2x-2m+2l+2\bigr)\\
+\rho_1(1)B_{3,\boldsymbol{s}}^{''}\bigl(2x-2m+2l+1\bigr)-B_{3,\boldsymbol{s}}^{''}\bigl(2x-2m+2l\bigr)\Bigr];\end{multline*}
while, with $\lambda_0(2)=2+\rho_1(2)\rho_2(2)$ and $\lambda_1(2)=2(\rho_1(2)+\rho_2(2))$,  
\begin{multline*}
{\psi}_{2,\boldsymbol{k},\boldsymbol{s}}(x)=\gamma_{2,\boldsymbol{k}}\sum_{m_1\ge 0}\bigl(-r_1(2)\bigr)^{m_1}\sum_{m_2\ge 0}\bigl(-r_1(2)\bigr)^{m_2}\sum_{l_1\ge 0}r_1^{2l_1}(2)\sum_{l_2\ge 0}r_1^{2l_2}(2)\\\Bigl[B_{2,\boldsymbol{s}}\bigl(2x-2 m_1-2m_2+ 2l_1+2l_2+4\bigr)
-\Bigl(\frac{\lambda_1(2)}{2}+3\Bigr)B_{2,\boldsymbol{s}}\bigl(2x- 2m_1-2m_2+ 2l_1+2l_2+3\bigr)\\+\Bigl(\lambda_0(2)+\frac{3\lambda_1(2)}{2}+3\Bigr)B_{2,\boldsymbol{s}}\bigl(2x- 2m_1-2m_2+ 2l_1+2l_2+2\bigr)\\-\bigl(3\lambda_0(2)+2\lambda_1(2)+1\bigr)B_{2,\boldsymbol{s}}\bigl(2x- 2m_1-2m_2+ 2l_1+2l_2+1\bigr)\\+\bigl(3\lambda_0(2)+2\lambda_1(2)+1\bigr)B_{2,\boldsymbol{s}}\bigl(2x- 2m_1-2m_2+ 2l_1+2l_2\bigr)\\-\Bigl(\lambda_0(2)+\frac{3\lambda_1(2)}{2}+3\Bigr)B_{2,\boldsymbol{s}}\bigl(2x- 2m_1-2m_2+ 2l_1+2l_2-1\bigr)\\
+\Bigl(\frac{\lambda_1(2)}{2}+3\Bigr)B_{2,\boldsymbol{s}}\bigl(2x- 2m_1-2m_2+ 2l_1+2l_2-2\bigr)
-B_{2,\boldsymbol{s}}\bigl(2x- 2m_1-2m_2+ 2l_1+2l_2-3\bigr)\Bigr]
\\
=\frac{
\gamma_{2,\boldsymbol{k}}}{8}\sum_{m_1\ge 0}\bigl(-r_1(2)\bigr)^{m_1}\sum_{m_2\ge 0}\bigl(-r_1(2)\bigr)^{m_2}\sum_{l_1\ge 0}r_1^{2l_1}(2)\sum_{l_2\ge 0}r_1^{2l_2}(2)\\\Bigl[B_{5,\boldsymbol{s}}^{(3)}\bigl(2x-2m_1-2m_2+2l_1+2l_2+4\bigr)
-\frac{\lambda_1(2)}{2}B_{5,\boldsymbol{s}}^{(3)}\bigl(2x-2m_1-2m_2+2l_1+2l_2+3\bigr)\\+\lambda_0(2)B_{5,\boldsymbol{s}}^{(3)}\bigl(2x-2m_1-2m_2+2l_1+2l_2+2\bigr)-\frac{\lambda_1}{2}B_{5,\boldsymbol{s}}^{(3)}\bigl(2x-2m_1-2m_2+2l_1+2l_2+1\bigr)\\+B_{5,\boldsymbol{s}}^{(3)}\bigl(2x-2m_1-2m_2+2l_1+2l_2\bigr)\Bigr].\end{multline*}

By construction, orthonormal wavelet systems $\{
{\phi}_{n,\boldsymbol{k}}, {\psi}_{n,\boldsymbol{k},\boldsymbol{s}}\}$ are from $\mathrm{MRA}_{B_n}$  
for any $\boldsymbol{k},\boldsymbol{s}\in\mathbb{Z}$. 
Substitution $x=\tilde{x}+1/2$ into the definition of $B_n$ leads to  another type of Battle--Lemari\'{e} wavelet systems of natural orders $\{
\tilde{\phi}_{n,\boldsymbol{k}}, \tilde{\psi}_{n,\boldsymbol{k},\boldsymbol{s}}\}$, which are shifted with respect to $\{
{\phi}_{n,\boldsymbol{k}}, {\psi}_{n,\boldsymbol{k},\boldsymbol{s}}\}$ in $1/2$ to the left. These are from $\mathrm{MRA}_{\tilde{B}_n}$ generated by the shifted $B$--spline $\tilde{B}_n(x):=B_n(x+1/2)$. 
Basic properties of Battle--Lemari\'{e} wavelet systems are described in \cite[Proposition 3.1]{RMC}. These systems can be chosen to be $k$--smooth functions if $n\ge k+1$ having exponential decay with decreasing rate as $n$ increases\cite[\S~5.4]{Dau}.

\subsection{Localisation of elements of Battle--Lemari\'{e} wavelet systems}\label{NN} For $n\in\mathbb{N}$ the both ${\phi}_{n,\boldsymbol{k}}$ and ${\psi}_{n,\boldsymbol{k},\boldsymbol{s}}$ have unbounded supports on $\mathbb{R}$ (see \S~\ref{dusya}). Several the so called algorithms for localising $\{
{\phi}_{n,\boldsymbol{k}}, {\psi}_{n,\boldsymbol{k},\boldsymbol{s}}\}$ were established in \cite[\S~3]{JMAA} and \cite[\S~3.2.2]{RMC}. 
Here we introduce, in some sense, their mixed version. 

We have (see \eqref{phi_nn} and \eqref{Npsi_nL} with \eqref{Npsihat}):
\begin{equation}\label{phi_nn'}
\hat{\phi}_{n,\boldsymbol{k}}(\omega)=\beta_n\,\frac{\mathrm{e}^{-i\omega\boldsymbol{k}}\,\hat{B}_{n}(\omega)}{\mathbf{A}_n(\omega)},\end{equation}
\begin{equation*}\hat{\psi}_{n,\boldsymbol{k},\boldsymbol{s}}(\omega)=\beta_{n}\,(-1)^{n+1+\boldsymbol{k}}\,2^{-n-1}\,
\frac{\bigl|\mathscr{A}_n(\pm\omega/2)\bigr|^2\displaystyle\sum_{k=0}^{n+1}\frac{(-1)^k(n+1)!}{k!(n+1-k)!}\,\mathrm{e}^{(n-k)i\omega/2}\,\mathrm{e}^{-i\omega\boldsymbol{s}}\,\hat{B}_{n}(\omega/2)}{\mathbf{A}_n(-\omega)\mathcal{A}_n(\omega)}.\end{equation*}

Localisation algorithms are based on getting rid of denominators in $\hat{\phi}_{n,\boldsymbol{k}}$ and $\hat{\psi}_{n,\boldsymbol{k},\boldsymbol{s}}$, which, in fact, cause spreading over $\mathbb{R}$ the compactly supported pre--images of $\hat{B}_{n,\boldsymbol{k}}$ (see \eqref{phi_nn'}) and $$\bigl|\mathscr{A}_n(\pm\omega/2)\bigr|^2\sum_{k=0}^{n+1}\frac{(-1)^k(n+1)!}{k!(n+1-k)!}\mathrm{e}^{(n-k)i\omega/2}\,\mathrm{e}^{-i\omega\boldsymbol{s}}\,\hat{B}_{n}(\omega/2)$$ standing in numerators of $\hat{\phi}_{n,\boldsymbol{k}}$ and $\hat{\psi}_{n,\boldsymbol{k},\boldsymbol{s}}$, respectively.

Fix some $m\in\mathbb{N}$. One of localised versions of ${\phi}_{n,\boldsymbol{k}}$ can be represented by a function ${\mathbf{\Phi}}_{n,\boldsymbol{k}}$ \cite[\S~3]{JMAA} such that
\begin{equation}\label{phi_fint}
\hat{\mathbf{\Phi}}_{n,\boldsymbol{k}}(\omega)=\hat{\phi}_{n,\boldsymbol{k}}(\omega)\mathbf{A}_n(\omega)=\beta_n\,\hat{B}_{n,\boldsymbol{k}}(\omega).\end{equation}
Respectively to the form of $\mathbf{A}_n(\omega)$, the localised functions ${\mathbf{\Phi}}_{n,\boldsymbol{k}}$ are linear combinations of $n+1$ orthogonal to each other functions ${{\phi}}_{n,\boldsymbol{k}}$, with $\boldsymbol{k}$ replaced by $\boldsymbol{k}-n,\ldots,\boldsymbol{k}-1,\boldsymbol{k}$ and coefficients corresponding to the definition of $\mathbf{A}_n(\omega)$. The number of steps for performing ${\mathbf{\Phi}}_{n,\boldsymbol{k}}$ from integer translations of ${{\phi}}_{n,\boldsymbol{k}}$ is equal to $n$. According to the structure of $\mathbf{A}_n(\omega)$, at each step, we should sum an element obtained at the previous stage (or ${{\phi}}_{n,\boldsymbol{k}}$ in the beginning) with the same one, but shifted in 1 and multiplied by $r_l(n)$, $l=1,\ldots,n$.

As a localised analogue of ${\psi}_{n,\boldsymbol{k},\boldsymbol{s}}$, suitable for our purposes, we shall use a function $\mathbf{\Psi}_{n,m(\Bbbk);\boldsymbol{k},\boldsymbol{s}}$ satisfying the condition
\begin{multline}\label{Npsihat'}\hat{\mathbf{\Psi}}_{n,m(\Bbbk);\boldsymbol{k},\boldsymbol{s}}(\omega)=
\hat{\psi}_{n,\boldsymbol{k},\boldsymbol{s}}(\omega){\mathbf{A}_n(-\omega)\mathcal{A}_n(\omega)}\,\bigl|\mathcal{A}_{m}(\pm\omega)\bigr|^{2\Bbbk}\\=
\frac{\beta_{n}(-1)^{\boldsymbol{k}}}{(-2)^{n+1}}\,\bigl|\mathscr{A}_n(\pm\omega/2)\bigr|^2\,\bigl|\mathcal{A}_{m}(\pm\omega)\bigr|^{2\Bbbk}\sum_{k=0}^{n+1}\frac{(-1)^k(n+1)!}{k!(n+1-k)!}\mathrm{e}^{(n-k)i\omega/2}\,\mathrm{e}^{-i\omega\boldsymbol{s}}\,\hat{B}_{n}(\omega/2)\end{multline} with power $\Bbbk\in\{0,1\}$. If $\Bbbk=0$ then \begin{equation}\label{dop1}\hat{\mathbf{\Psi}}_{n,m(0);\boldsymbol{k},\boldsymbol{s}}=:\hat{\mathbf{\Psi}}_{n,\boldsymbol{k},\boldsymbol{s}}=\hat{\psi}_{n,\boldsymbol{k},\boldsymbol{s}}(\omega){\mathbf{A}_n(-\omega)\mathcal{A}_n(\omega)}\end{equation} (see \cite[\S~3.2.2]{RMC}). According to the form of the product $\mathbf{A}_n(-\omega)\mathcal{A}_n(\omega)$, the functions ${\mathbf{\Psi}}_{n,\boldsymbol{k},\boldsymbol{s}}$ are linear combinations of $2n+1$ mutually orthogonal elements ${\psi}_{n,\boldsymbol{k},\boldsymbol{s}}$, with $\boldsymbol{s}$ replaced by $\boldsymbol{s}-n,\ldots, \boldsymbol{s}, \ldots,\boldsymbol{s}+n$ and coefficients related to the definitions of $\mathbf{A}_n(-\omega)$ and $\mathcal{A}_n(\omega)$. Construction of ${\mathbf{\Psi}}_{n,\boldsymbol{k},\boldsymbol{s}}$ from 
${\psi}_{n,\boldsymbol{k},\boldsymbol{s}}$ requires $2n$ steps, and is fully dependent of the structure of $\mathbf{A}_n(-\omega)\mathcal{A}_n(\omega)$. 
Respectively, comparing the first line in \eqref{Npsihat'} for $\Bbbk=1$ with \eqref{dop1}, one can observe that ${\mathbf{\Psi}}_{n,m(1);\boldsymbol{k},\boldsymbol{s}}$ are linear combinations of ${\mathbf{\Psi}}_{n,\boldsymbol{k},\boldsymbol{s}}$ with $\boldsymbol{s}$ replaced by $\boldsymbol{s}-m,\ldots, \boldsymbol{s}, \ldots\boldsymbol{s}+m$ and coefficients related to the definition of $\mathcal{A}_m(-\omega)\mathcal{A}_m(\omega)$. Therefore, the total number of steps for establishing ${\mathbf{\Psi}}_{n,m(1);\boldsymbol{k},\boldsymbol{s}}$ from orthogonal to each other integer shifts of ${\psi}_{n,\boldsymbol{k},\boldsymbol{s}}$ equals to $2n+2m$.

Property \eqref{diff} (or \eqref{diff'}) provides possibilities to construct from ${\mathbf{\Phi}}_{n,\boldsymbol{k}}$ proper variants of functions such that their (anti)derivatives of order $|n-m|$ coincide with ${\mathbf{\Phi}}_{m,\boldsymbol{k}}$ for $m\not= n$. Analogously, one can use ${\mathbf{\Psi}}_{n,m(1);\boldsymbol{k},\boldsymbol{s}}$ to built functions having $|n-m|-$th order (anti)derivatives equal to some linear combinations of shifts of ${\mathbf{\Psi}}_{{m},\boldsymbol{k},\boldsymbol{s}}$ and ${\mathbf{\Psi}}_{{m},\boldsymbol{k},\boldsymbol{s}+1/2}$ with $m\not= n$.
This fact can be useful for the study of particular classes of operators in weighted function spaces, when one needs to transfer from a basis of smoothness $n$ to basis elements of some other smoothness, say $m$.  

On the strength of \eqref{phi_fint} and \eqref{Npsihat'}, the ${\mathbf{\Phi}}_{n,\boldsymbol{k}}$ and $\mathbf{\Psi}_{n,m(\Bbbk);\boldsymbol{k},\boldsymbol{s}}$ are compactly supported.
It also holds: \begin{equation}\label{Fu0}\bigl|\hat{\mathbf{\Phi}}_{n,\boldsymbol{k}}(\omega)\bigr|
=\bigl|\hat{\phi}_{n,\boldsymbol{k}}(\omega){\mathbf{A}_n(\omega)}\bigr|
=\beta_n\bigl|\hat{B}_{n;\boldsymbol{k}}(\omega)\bigr|,\end{equation}\begin{multline}\label{Fu}
\bigl|\hat{\mathbf{\Psi}}_{n,m(\Bbbk);\boldsymbol{k},\boldsymbol{s}}(\omega)\bigr|
=\bigl|\mathcal{A}_{m}(\pm\omega)\bigr|^{2\Bbbk}\bigl|\hat{\psi}_{n,\boldsymbol{k},\boldsymbol{s}}(\omega){\mathbf{A}_n(\omega)}\mathcal{A}_n(\omega)\bigr|\\=\frac{|{\gamma}_{n,\boldsymbol{k}}|}{2}\bigl|\mathscr{A}_{n}(\pm\omega/2)\bigr|^2
\bigl|\mathcal{A}_{m}(\pm\omega)\bigr|^{2\Bbbk}\bigl|\mathrm{e}^{i\omega/2}-1\bigr|^{n+1}\bigl|\hat{B}_{n;\boldsymbol{s}}(\omega/2)\bigr|.\end{multline}  

It follows from \eqref{phi_fint} and \eqref{Npsihat'}{, respectively,} that
\begin{equation*}\label{dr}\mathbf{\Phi}_{n,\boldsymbol{k}}(x)={\beta}_{n}\,B_{n,\boldsymbol{k}}(x),\end{equation*}
\begin{equation*}
\mathbf{\Psi}_{n,\boldsymbol{k},\boldsymbol{s}}(x)=\frac{{\gamma}_{n,\boldsymbol{k}}}{2^{n}}
\Bigl[
\sum_{j=0}^{n}\frac{\lambda_{j}(n)}{2(-1)^{j}}
\bigl[B_{2n+1}^{(n+1)}\bigl(2(x-\boldsymbol{s})+n+j\bigr)+B_{2n+1}^{(n+1)}\bigl(2(x-\boldsymbol{s})+n-j\bigr)\Bigr]
.\end{equation*}

Notice that $\mathbf{\Phi}_{n,\boldsymbol{k}}=\sum_{\kappa=0}^n\boldsymbol{\alpha}'_\kappa\cdot \phi_{n,\boldsymbol{k}-\kappa}$ and $\mathbf{\Psi}_{n,m(\Bbbk);\boldsymbol{k},\boldsymbol{s}}=\sum_{|\kappa|\le n+m\Bbbk}\boldsymbol{\alpha}''_\kappa\cdot \psi_{n,\boldsymbol{k},\boldsymbol{s}+\kappa}$, where 
\begin{equation}\label{positive0}\sum_{\kappa=0}^n\boldsymbol{\alpha}'_\kappa=\bigl(1+r_1(n)\bigr)\ldots\bigl(1+r_n(n)\bigr):=\mathbf{\Lambda}'_n>0,\end{equation}
\begin{multline}\label{positive}\sum_{|\kappa|\le n+m\Bbbk}\boldsymbol{\alpha}''_\kappa=\Bigl[\bigl(1+r_1(n)\bigr)\bigl(1-r_1^2(n)\bigr)\ldots\bigl(1+r_n(n)\bigr)\bigl(1-r_n^2(n)\bigr)\Bigr]\\\Bigl[\bigl(1-r_1^2(m)\bigr)\ldots\bigl(1-r_m^2(m)\bigr)\Bigr]^{2\Bbbk}:=\mathbf{\Lambda}''_n>0\end{multline} (see \eqref{phi_fint}, \eqref{Npsihat'} and definitions of $\mathbf{A}_n(\pm\omega)$ and $\mathcal{A}_n(\pm\omega)$).
The $\mathbf{\Phi}_{n,\boldsymbol{k}}$ and $\mathbf{\Psi}_{n,m(\Bbbk);\boldsymbol{k},\boldsymbol{s}}$ are localised (e.g. have compact supports), in addition, \begin{equation}\label{vazhno}\textrm{supp}\,\mathbf{\Phi}_{n,\boldsymbol{k}}=[\boldsymbol{k},\boldsymbol{k}+n+1]\quad\textrm{and}\quad\textrm{supp}\,\mathbf{\Psi}_{n,m(\Bbbk);\boldsymbol{k},\boldsymbol{s}}=[\boldsymbol{s}-n/2-m\Bbbk,\boldsymbol{s}+3n/2+m\Bbbk+1].\end{equation} It follows from the localisation algorithms that the functions $\mathbf{\Phi}_{n,\boldsymbol{k}}$ and $\mathbf{\Psi}_{n,m(\Bbbk);\boldsymbol{k},\boldsymbol{s}}$ are finite linear combinations of integer translations of $\phi_{n,\boldsymbol{k}}$ and $\psi_{n,\boldsymbol{k},\boldsymbol{s}}$, respectively, which are elements of the same orthonormal basis in $\textrm{MRA}_{B_n}$ of $L_2(\mathbb{R})$. On the strength of \eqref{Fu0} the system $\{\mathbf{\Phi}_{n,\boldsymbol{k}}(\cdot-\tau)\colon \tau\in\mathbb{Z}\}$ forms a Riesz basis in the subspace $\mathscr{V}_0\subset L_2(\mathbb{R})$ related to $\textrm{MRA}_{B_n}$. 
Integer translates of $\mathbf{\Psi}_{n,m(\Bbbk);\boldsymbol{k},\boldsymbol{s}}$ also form a Riesz basis in $\mathscr{W}_0\subset L_2(\mathbb{R})$ related to $\textrm{MRA}_{B_n}$. The both facts are confirmed by the forms of $|\hat{\mathbf{\Phi}}_{n,\boldsymbol{k}}|$ and $|\hat{\mathbf{\Psi}}_{n,m(\Bbbk);\boldsymbol{k},\boldsymbol{s}}|$ (see \eqref{Fu0}, \eqref{Fu}). Further properties of $\bigl\{\mathbf{\Phi}_{n,\boldsymbol{k}},\mathbf{\Psi}_{n,m(\Bbbk);\boldsymbol{k},\boldsymbol{s}}\bigr\}$ can be found in \cite[Proposition 3.2]{RMC}.


\section{Characterisation of weighted function spaces by Battle--Lemari\'{e} spline wavelets}\label{Char} 


Let $C(\mathbb{R}^N)$ be the space of all complex--valued uniformly continuous bounded functions in $\mathbb{R}^N$ and let $$C^M(\mathbb{R}^N)=\bigl\{f\in C(\mathbb{R}^N)\colon D^\gamma f\in C(\mathbb{R}^N),\,|\gamma|\le M\bigr\},\qquad M\in\mathbb{N}_0,$$ obviously normed. We shall use the convention $C^0(\mathbb{R}^N)=C(\mathbb{R}^N)$.
  
For a given $N\in\mathbb{N}$ and for each $l\in\{1,\ldots,N\}$ let $\mathscr{V}_{d}$, $d\in\mathbb{N}_0$, denote the multi--resolution approximation of $L_2(\mathbb{R})$ generated by $B_{n_l}$ with $\mathscr{V}_{d+1}=\mathscr{V}_{d}\oplus\mathscr{W}_{d}$ for each $d\in\mathbb{N}_0$ (see p. \pageref{MRA}). Define $V_{d}$, $d\in\mathbb{N}_0$, as the closure in $L_2(\mathbb{R}^N)-$norm of the tensor product
$\underbrace{\mathscr{V}_{d}\otimes\ldots\otimes\mathscr{V}_{d}}_{N \textrm{ times}}$, $d\in\mathbb{N}_0$. 
 
For each $l\in\{1,\ldots,N\}$ we fix $n_l,m_l\in\mathbb{N}$, $\Bbbk_l\in\{0,1\}$ with $\boldsymbol{k}_l$, $\boldsymbol{s}_l\in\mathbb{Z}$ and denote \begin{equation}\label{vazhnoo}\widetilde{\mathbf{\Phi}}(x):=\mathbf{\Phi}_{n_l,\boldsymbol{k}_l}(x)/\mathbf{\Lambda}'_{n_l}\quad\textrm{and}\quad \widetilde{\mathbf{\Psi}}(x):=\mathbf{\Psi}_{n_l,m_l(\Bbbk_l);\boldsymbol{k}_l,\boldsymbol{s}_l}(x)/\mathbf{\Lambda}''_{n_l}\end{equation} with $\mathbf{\Lambda}'_{n_l}$ and $\mathbf{\Lambda}''_{n_l}$ as in \eqref{positive0} and \eqref{positive}. Therefore, we have chosen $N$ one--dimensional Battle--Lemari\'{e} wavelet systems $\bigl\{\widetilde{\mathbf{\Phi}},\widetilde{\mathbf{\Psi}}\bigr\}$ of order $n_l$. Applying the tensor--product proce\-du\-re one can obtain the following $N-$dimensional scaling and wavelet functions $\mathbf{\Phi}$ and $\mathbf{\Psi}$, with compact supports on $\mathbb{R}^N$, related to the pairs $\{\widetilde{\mathbf{\Phi}},\widetilde{\mathbf{\Psi}}\}$:
\begin{equation}\label{wavelets_main}{\mathbf{\Phi}}\in C^{n_0-1}(\mathbb{R}^N),\qquad {\mathbf{\Psi}}_{i}\in C^{n_0-1}(\mathbb{R}^N)\quad(i=1,\ldots,2^N-1),\end{equation} here $n_0=\min\{n_1,\ldots,n_N\}$. Being linear combinations of basis functions (from $\mathscr{V}_0$ only or $\mathscr{W}_0$ only), integer translates of $\widetilde{\mathbf{\Phi}}$ and $\widetilde{\mathbf{\Psi}}$ form (semi--orthogonal) Riesz bases (in $\mathscr{V}_0$ and $\mathscr{W}_0$, respectively) of order $n_l$ for each $l\in\{1,\ldots,N\}$. 
Thus, ${\mathbf{\Phi}}$ and ${\mathbf{\Psi}}$ constitute Riesz basss in ${V}_0$ and ${W}_0$ of smoothness $n_0-1$. 
 
For $x\in\mathbb{R}^N$ we denote \begin{multline}\label{ForRepr'}{\mathbf{\Phi}}_{\tau}(x):=\mathbf{\Phi}(x-\tau)\quad\textrm{and}\quad {\mathbf{\Psi}}_{id\tau}(x):=2^{dN/2}\mathbf{\Psi}_i(2^dx-\tau) \\ (i=1,\ldots,2^N-1;\,\tau\in\mathbb{Z}^N;\, d\in\mathbb{N}_0).\end{multline}


Basis functions ${\mathbf{\Phi}}_{\tau}$ and ${\mathbf{\Psi}}_{id\tau}$ from wavelet systems \eqref{ForRepr'} generated by \eqref{vazhnoo} and \eqref{wavelets_main} can both serve as atoms and local means (see Definitions \ref{Atoms} and \ref{localmeans} in \S\,\ref{ddd}).

\subsection{Atomic representation}\label{ddd}
For $(\tau_1,\ldots,\tau_N)=\tau\in\mathbb{Z}^N$ and $d\in\mathbb{N}_0$ we define dyadic cubes 
$$Q_{d\tau}:=\prod_{l=1}^N\Bigl[\frac{\tau_l-1/2}{2^d},\frac{\tau_l+1/2}{2^d}\Bigr]$$ with sides of lengths $2^{-d}$ parallel to the axes of coordinates. For $0<p<\infty$, $d\in\mathbb{N}_0$ and $\tau\in\mathbb{Z}^N$ we denote by $\chi_{d\tau}^{(p)}$ the $p-$normalised characteristic function of the cube 
$Q_{d\tau}$: $$\chi_{d\tau}^{(p)}(x):=2^{dN/p}
\chi_{d\tau}(x):=\begin{cases}2^{dN/p}, & x\in Q_{d\tau},\\ 0, & x\not\in Q_{d\tau},\end{cases}\qquad \|\chi_{d\tau}^{(p)}\|_{L^p(\mathbb{R}^N)}=1.$$
\begin{definition}\label{Atoms}
{\rm Let $-\infty<s<+\infty$, $0<p<\infty$, $\mathbf{K},\mathbf{L}\in \mathbb{N}_0$ and $\boldsymbol{d}\ge 1$.\\ (i) Functions $a\in C^\mathbf{K}(\mathbb{R}^N)$ are called $1_\mathbf{K}-$atoms if $\textrm{supp}\,a\subset \boldsymbol{d}\,Q_{0\tau}$ for some $\tau\in\mathbb{Z}^N$ and $|D^{\boldsymbol{\alpha}} a(x)|\le 1$ for $|\boldsymbol{\alpha}|\le \mathbf{K}$, $x\in\mathbb{R}^N$.\\
(ii) Functions $a\in C^\mathbf{K}(\mathbb{R}^N)$ are called $(s,p)_{\mathbf{K},\mathbf{L}}-$atoms if for some $d\in\mathbb{N}$ $$\textrm{supp}\,a\subset \boldsymbol{d}\,Q_{d\tau}\qquad\textrm{with some}\quad \tau\in\mathbb{Z}^N,$$ $$|D^{\boldsymbol{\alpha}} a(x)|\le 2^{-d(s-N/p)+d|{\boldsymbol{\alpha}}|}\qquad \textrm{for}\quad |{\boldsymbol{\alpha}}|\le \mathbf{K},\ x\in\mathbb{R}^N,$$ $$\int_{\mathbb{R}^N}x^{\boldsymbol{\beta}} a(x)\,dx=0\qquad \textrm{for}\quad |{\boldsymbol{\beta}}|< \mathbf{L}. $$
}\end{definition}

\begin{definition} {\rm Let $w\in\mathscr{A}_\infty^\loc$. For $0<p<\infty$, $0<q\le\infty$ we define
a weighted space $\boldsymbol{b}_{pq}^w$ to be the collection of all sequences $\boldsymbol{\lambda}$ given by \begin{equation}\label{sequences}\boldsymbol{\lambda}=\{\boldsymbol{\lambda}_{00\tau}\}_{\tau\in\mathbb{Z}^N}\cup\{\boldsymbol{\lambda}_{id\tau}\}_{{i=1,\ldots,2^N-1};\,{d\in\mathbb{N};\,\tau\in\mathbb{Z}^N}}\quad\textrm{with}\quad \boldsymbol{\lambda}_{00\tau},\boldsymbol{\lambda}_{id\tau}\in\mathbb{C}\end{equation} such that
\begin{multline*}\|\boldsymbol{\lambda}\|_{\boldsymbol{b}_{pq}^w}:=\biggl(\int_{\mathbb{R}^N}\Bigl|\sum_{\tau\in\mathbb{Z}^N}\boldsymbol{\lambda}_{00\tau}\chi_{0\tau}^{(p)}(x)\Bigr|^pw(x)\,dx\biggr)^{\frac{1}{p}}\\+
\Biggl(\sum_{d=1}^\infty \sum_{i=1}^{2^N-1}\biggl(\int_{\mathbb{R}^N}\Bigl|\sum_{\tau\in\mathbb{Z}^N} \boldsymbol{\lambda}_{id\tau}\chi_{d\tau}^{(p)}(x)\Bigr|^p w(x)\,dx\biggr)^\frac{q}{p}\Biggr)^\frac{1}{q}<\infty.\end{multline*} For $0<p<\infty$, $0<q\le\infty$ or $p=q=\infty$ a weighted space $\boldsymbol{f}_{pq}^w$ is the collection of all \eqref{sequences} such that
\begin{multline*}\|\boldsymbol{\lambda}\|_{\boldsymbol{f}_{pq}^w}=
\biggl(\int_{\mathbb{R}^N}\biggl(\sum_{\tau\in\mathbb{Z}^N} \Bigl|\boldsymbol{\lambda}_{00\tau}\chi_{0\tau}^{(p)}(x)\Bigr|^{q}\biggr)^\frac{p}{q}w(x)\,dx\biggr)^{\frac{1}{p}}\\
+\Biggl(\int_{\mathbb{R}^N}\biggl(\sum_{d=1}^\infty \sum_{i=1}^{2^N-1}\sum_{\tau\in\mathbb{Z}^N}\Bigl|\boldsymbol{\lambda}_{id\tau}\chi_{d\tau}^{(p)}(x)\Bigr|^q\biggr)^\frac{p}{q}w(x)\,dx\Biggr)^{\frac{1}{p}}<\infty.\end{multline*}}\end{definition} The expressions defining the (quasi--)norms $\|\cdot\|_{\boldsymbol{b}_{pq}^w}$ and $\|\cdot\|_{\boldsymbol{f}_{pq}^w}$ are subjects of standard modification in the cases of infinite parameters $p$ and/or $q$.

To simplify the notation we write $\boldsymbol{a}_{pq}^w$ instead of either $\boldsymbol{b}_{pq}^w$ or $\boldsymbol{f}_{pq}^w$.
The sequence spaces $\boldsymbol{a}_{pq}^w$ are adapted versions of the spaces from \cite[Definition 3.2]{IS} or \cite[Definition 4.2]{WBan} with respect to the additional parameter $i=1,\ldots,2^N-1$.
We shall denote an atom $a(x)$ supported in some $Q_{d\tau}$ by $a_{id\tau}$ in the sequel.

For $w\in\mathscr{A}_\infty^\loc$ with $\boldsymbol{r}_0$ of the form \eqref{r_w} we define $$\sigma_{p}(w):=N\Bigl(\frac{\boldsymbol{r}_0}{\min\{p,\boldsymbol{r}_0\}}-1\Bigr)+N(\boldsymbol{r}_0-1),$$ $$\sigma_q:=\frac{N}{\min\{1,q\}}-N\qquad\textrm{and}\qquad \sigma_{pq}(w):=\max\bigl\{\sigma_p(w),\sigma_q\bigr).$$

In 2012 Izuki M. and Sawano Y. proved the atomic representation theorem for elements of spaces $A_{pq}^{s,w}(\mathbb{R}^N)$ \cite{IS}. This result admits the following reformulation (see \cite[Theorem 3.4]{IS} and also \cite[Theorem 4.3]{WBan}).

\begin{theorem}\label{AtomTh}
Let $0<p<\infty$, $0<q\le\infty$, $-\infty<s<+\infty$ and $w\in\mathscr{A}_\infty^\loc$. Assume that $\mathbf{K},\mathbf{L}+1\in\mathbb{N}_0$ satisfy the conditions $$\mathbf{K}\ge (1+[s])_+\qquad \textrm{and}\qquad \mathbf{L}\ge \max\bigl\{-1,[\sigma_{p}(w)-s]\bigr\}$$ when $A_{pq}^{s,w}(\mathbb{R}^N)$ denotes $B_{pq}^{s,w}(\mathbb{R}^N)$, or are such that
$$\mathbf{K}\ge (1+[s])_+\qquad \textrm{and}\qquad \mathbf{L}\ge \max\bigl\{-1,[\sigma_{p,q}(w)-s]\bigr\}$$ in the case $A_{pq}^{s,w}(\mathbb{R}^N)=F_{pq}^{s,w}(\mathbb{R}^N)$.\\ {\rm(i)} If $f\in A_{pq}^{s,w}(\mathbb{R}^N)$ then there exist sequences of {$1_\mathbf{K}-$}atoms $\{\boldsymbol{a}_{00\tau}\}$, ${\tau\in\mathbb{Z}^N}$, and {$(s,p)_{\mathbf{K},\mathbf{L}}-$atoms} $\{\boldsymbol{a}_{id\tau}\}$, ${i=1,\ldots,2^N-1}$, $d\in\mathbb{N}$, $\tau\in\mathbb{Z}^N$, and $\boldsymbol{\lambda}\in \boldsymbol{a}_{pq}^w$ such that $$f=\sum_{\tau\in\mathbb{Z}^N}\boldsymbol{\lambda}_{00\tau}\,\boldsymbol{a}_{00\tau}+ \sum_{d=1}^\infty\sum_{i=1}^{2^N-1}\sum_{\tau\in\mathbb{Z}^N}\boldsymbol{\lambda}_{id\tau}\,\boldsymbol{a}_{id\tau}$$ with convergence in $\mathscr{S}'_e(\mathbb{R}^N)$, and $\|\boldsymbol{\lambda}\|_{\boldsymbol{a}_{pq}^w}\le c\|f\|_{A_{pq}^{s,w}(\mathbb{R}^N)}$.\\  
{\rm(ii)} Conversely, if $\{\boldsymbol{a}_{00\tau}\}$, ${\tau\in\mathbb{Z}^N}$, and $\{\boldsymbol{a}_{id\tau}\}$, ${i=1,\ldots,2^N-1}$, $d\in\mathbb{N}$, $\tau\in\mathbb{Z}^N$, are sequences of {$1_\mathbf{K}-$ and $(s,p)_{\mathbf{K},\mathbf{L}}-$atoms, respectively,} and $\boldsymbol{\lambda}\in \boldsymbol{a}_{pq}^w$ then the series 
$$f=\sum_{\tau\in\mathbb{Z}^N}\boldsymbol{\lambda}_{00\tau}\,\boldsymbol{a}_{00\tau}+ \sum_{d=1}^\infty\sum_{i=1}^{2^N-1}\sum_{\tau\in\mathbb{Z}^N}\boldsymbol{\lambda}_{id\tau}\,\boldsymbol{a}_{id\tau}$$ converges in $\mathscr{S}'_e(\mathbb{R}^N)$ and belongs to $A_{pq}^{s,w}(\mathbb{R}^N)$. Moreover, $$\|f\|_{A_{pq}^{s,w}(\mathbb{R}^N)}\le c\|\boldsymbol{\lambda}\|_{\boldsymbol{a}_{pq}^w
}.$$ \end{theorem}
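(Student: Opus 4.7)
The plan is to follow the standard two-direction argument for atomic decomposition, adapted to the local Muckenhoupt setting, taking full advantage of the weight-extension lemma stated earlier.

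For part (ii), from atomic series to function space membership, I would first check convergence in $\mathscr{S}'_e(\mathbb{R}^N)$. Pairing the series with a test function $\boldsymbol{\varphi}\in\mathscr{S}_e(\mathbb{R}^N)$ and using the embedding $\mathscr{S}_e(\mathbb{R}^N)\hookrightarrow L_p(\mathbb{R}^N,w)$ together with the uniform size and support bounds on atoms reduces absolute convergence to finiteness of $\|\boldsymbol{\lambda}\|_{\boldsymbol{a}_{pq}^w}$ times a seminorm of $\boldsymbol{\varphi}$. Next, to bound $\|f\|_{A_{pq}^{s,w}}$ of the sum, I would apply the convolutions $\boldsymbol{\varphi}_d*(\cdot)$ from Definition \ref{Def2} term by term and estimate $|\boldsymbol{\varphi}_d*\boldsymbol{a}_{id'\tau'}(x)|$ pointwise. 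In the case $d\leq d'$ the vanishing moments of the atom up to order $\mathbf{L}-1$ combined with a Taylor expansion of $\boldsymbol{\varphi}_d$ at $2^{-d'}\tau'$ furnish a decay factor $2^{(d-d')(\mathbf{L}+N)}$; in the case $d\geq d'$ the $\mathbf{K}$-fold smoothness of $\boldsymbol{a}_{id'\tau'}$, together with its size normalisation $|D^{\boldsymbol{\alpha}}\boldsymbol{a}_{id'\tau'}|\lesssim 2^{-d'(s-N/p)+d'|\boldsymbol{\alpha}|}$, yields the decay factor $2^{(d'-d)\mathbf{K}}$. In either case one also gets a spatial decay $(1+2^{\min(d,d')}|x-2^{-d'}\tau'|)^{-M}$ for any fixed large $M$.

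These pointwise bounds let me dominate the outcome by the local Peetre/Hardy--Littlewood maximal operator $M^\loc$ applied to $\sum_{\tau'}|\boldsymbol{\lambda}_{id'\tau'}|\chi^{(p)}_{d'\tau'}$. The analytic core is then the weighted vector-valued maximal inequality for $M^\loc$ on $L_p(\mathbb{R}^N,w)$ and on $L_p(\ell_q;w)$, available under the assumption $w\in\mathscr{A}_r^\loc$ with $r<p/\boldsymbol{r}_0$. This is precisely the reason for the parameter thresholds $\sigma_p(w)$ and $\sigma_{pq}(w)$: they enter through the admissible range of $r$ in the maximal inequality. The extension lemma (\cite[Lemma 1.1]{R}, quoted just before \S\,\ref{rw}) reduces this inequality, cube by cube, to the classical Muckenhoupt setting, and the exponential bound in property (\textbf{P}2) handles the global patching. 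Summing over $d$ with the $\ell_q$ or $L_p(\ell_q)$ structure produces $\|f\|_{A_{pq}^{s,w}}\lesssim\|\boldsymbol{\lambda}\|_{\boldsymbol{a}_{pq}^w}$.

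For part (i), decomposing a given $f\in A_{pq}^{s,w}(\mathbb{R}^N)$, the plan is to start from a Calder\'on-type reproducing identity $f=\sum_{d=0}^\infty\boldsymbol{\eta}_d\ast\boldsymbol{\varphi}_d\ast f$, convergent in $\mathscr{S}'_e(\mathbb{R}^N)$, with auxiliary kernels $\boldsymbol{\eta}_d\in\mathscr{S}_e(\mathbb{R}^N)$ chosen to have sufficient smoothness ($\geq\mathbf{K}$ derivatives) and, for $d\geq 1$, vanishing moments up to the required order $\mathbf{L}$. Introducing a smooth partition of unity subordinate to the dyadic cubes $Q_{d\tau}$ and writing $\boldsymbol{\eta}_d\ast(\boldsymbol{\varphi}_d\ast f)=\sum_\tau\big(\text{localised piece supported in }\boldsymbol{d}\,Q_{d\tau}\big)$, each piece becomes, after dividing by the natural size factor $\boldsymbol{\lambda}_{id\tau}:=c\cdot 2^{-d(s-N/p)}\sup_{y\in Q^*_{d\tau}}|\boldsymbol{\varphi}_d\ast f(y)|$, a $(s,p)_{\mathbf{K},\mathbf{L}}$-atom (respectively a $1_\mathbf{K}$-atom for $d=0$). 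Standard Peetre-maximal-function arguments then yield $\|\boldsymbol{\lambda}\|_{\boldsymbol{a}_{pq}^w}\lesssim\|f\|_{A_{pq}^{s,w}}$ after once more invoking the weighted vector-valued maximal inequality.

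The main obstacle is precisely this weighted vector-valued inequality for the local maximal operator: one cannot directly apply classical Muckenhoupt theory and must carry out the localisation carefully via the extension lemma and property (\textbf{P}2), tracking how $\boldsymbol{r}_0$ enters to produce the sharp thresholds $\sigma_p(w)$ and $\sigma_{pq}(w)$. A secondary technical point is ensuring that when $s\leq 0$ the atoms carry enough cancellation $\mathbf{L}$ to absorb the negative smoothness in the pairing $d\leq d'$, which forces the condition $\mathbf{L}\geq[\sigma_p(w)-s]$ or $[\sigma_{pq}(w)-s]$ in the statement.
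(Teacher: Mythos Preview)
The paper does not supply its own proof of this statement: it is quoted, without argument, as a reformulation of \cite[Theorem~3.4]{IS} and \cite[Theorem~4.3]{WBan}, and is then used as a black box in the proof of Theorem~\ref{main}. So there is no in-paper proof to compare against.

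Your sketch is the standard route taken in those references and is essentially correct. Two small technical corrections. First, for part~(i) the reproducing identity you invoke must be Rychkov's local formula \eqref{arep} with kernels in $\mathscr{D}(\mathbb{R}^N)$, not merely $\mathscr{S}_e(\mathbb{R}^N)$: in the $\mathscr{S}'_e$ framework the Fourier transform is unavailable, so the usual Calder\'on formula does not apply, and compact support of the $\boldsymbol{\eta}_d$ is what makes the localised pieces genuinely supported in $\boldsymbol{d}\,Q_{d\tau}$. Second, in part~(ii) your claimed spatial decay $(1+2^{\min(d,d')}|x-2^{-d'}\tau'|)^{-M}$ for arbitrary $M$ is not the right picture: both the atoms and the $\boldsymbol{\varphi}_d$ are compactly supported, so $\boldsymbol{\varphi}_d\ast\boldsymbol{a}_{id'\tau'}$ is supported in a fixed dilate of $Q_{\min(d,d')\,\tau'}$, and one works with characteristic functions rather than polynomial tails. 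This does not break the argument---compact support is stronger---but it does simplify the passage to the maximal operator.
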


\subsection{Characterisation by local means}
Following \cite{TBan} and \cite{WBan} we start from definitions of special classes of functions of finite smoothness (kernels) and from notion of distributions of finite order.
\begin{definition}\label{kernels}
{\rm Let $\mathbf{A},\mathbf{B}\in \mathbb{N}_0$ and $\mathbf{C}> 0$. Then {$C^\mathbf{A}(\mathbb{R}^N)-$}functions $$k_{00\tau}\colon\mathbb{R}^N\mapsto\mathbb{C}\quad\textrm{�}\ \tau\in\mathbb{Z}^N\quad\textrm{and}\ k_{id\tau}\colon\mathbb{R}^N\mapsto\mathbb{C}\quad\textrm{with}\ i=1,\ldots,2^N-1,\,d\in\mathbb{N},\,\tau\in\mathbb{Z}^N$$ are called kernels if, firstly, for all $i=0,\ldots,2^N-1$ $$\textrm{supp}\,k_{id\tau}\subset \mathbf{C}\,Q_{d\tau},\qquad d\in\mathbb{N}_0,\quad \tau\in\mathbb{Z}^N;$$ secondly, there exist all (classical) derivatives $D^{\boldsymbol{\alpha}} k_{id\tau}\in C(\mathbb{R}^N)$ with $|{\boldsymbol{\alpha}}|\le\mathbf{A}$ such that for all $i=0,\ldots,2^N-1$ $$|D^{\boldsymbol{\alpha}} k_{id\tau}(x)|\le 2^{dN+d|{\boldsymbol{\alpha}}|},\qquad |{\boldsymbol{\alpha}}|\le \mathbf{A},\quad d\in\mathbb{N}_0,\quad \tau\in\mathbb{Z}^N,$$ and $$\int_{\mathbb{R}^N}x^{\boldsymbol{\beta}} k_{id\tau}(x)\,dx=0,\qquad |{\boldsymbol{\beta}}|< \mathbf{B},\quad d\in\mathbb{N},\quad \tau\in\mathbb{Z}^N.$$
}\end{definition}

{Let $C_K^M(\mathbb{R}^N)$ be a set of functions $\varphi\in C^M(\mathbb{R}^N)$ such that ${\rm supp}\,\varphi\subset K$, where $K\subset\mathbb{R}^N$ is a compact.} {By $C_0^M(\mathbb{R}^N)$ we denote a set consisting of all functions of order $M$ with compact supports.} 

\begin{definition} {\rm A distribution $f\in\mathscr{D}'(\mathbb{R}^N)$ is of order $M$ if for every compact $K\subset \mathbb{R}^N$ there exists a constant $c$ such that
$$|\langle f,\varphi\rangle|\le c\sum_{|\gamma|\le M} \sup
_{x\in K}|D^\gamma\varphi(x)|\quad \textrm{for every} \ 
\varphi\in C_0^\infty(\mathbb{R}^N).$$ {The set of all distributions of order $M$ is denoted by $\mathscr{D}'_M(\mathbb{R}^N).$ If $f\in\mathscr{D}'_M(\mathbb{R}^N)$ then $f$ can be extended to a} continuous linear functional on $C_0^\infty(\mathbb{R}^N)$. Moreover \cite{Ho}, $$(C_0^M(\mathbb{R}^N))'=\mathscr{D}'_M(\mathbb{R}^N).$$}\end{definition}

\begin{remark}\label{dfo}It was proven in \cite[Lemma 5.6 and Corollary 5.7]{WBan} that $A_{pq}^{s,w}(\mathbb{R}^N)$ consists of distributions of finite order $M$ provided $M\ge\bigl([N(\boldsymbol{r}_0-1)/p-s]+1\bigr)_+$.\end{remark}

Further, following again the concept from \cite{TBan} and \cite[\S~5]{WBan}, we define local means and introduce sequence spaces with local Muckenhoupt weights adaptively to the parameter $i=0,\ldots,2^N-1$.
\begin{definition}\label{localmeans} {\rm {Let $f\in
\mathscr{D}'_\mathbf{A}(\mathbb{R}^N)\cap\mathscr{S}'_e(\mathbb{R}^N)$.} Let $k_{id\tau}$ be kernels according to Definition \ref{kernels} {(with the same $\mathbf{A}$)}. Then $k_{00\tau}(f)=\langle f,k_{00\tau}\rangle$ with $\tau\in\mathbb{Z}^N$ and $k_{id\tau}(f)=\langle f,k_{id\tau}\rangle$ with $i=1,\ldots,2^N-1$, $d\in\mathbb{N}$, $\tau\in\mathbb{Z}^N$, are called local means. Furthermore, we put $$k(f)=\bigl\{k_{00\tau}(f)\colon \tau\in\mathbb{Z}^N\bigr\}\cup\bigl\{k_{id\tau}(f)\colon i=1,\ldots,2^N-1, d\in\mathbb{N},\, \tau\in\mathbb{Z}^N\bigr\}.$$
}\end{definition}

\begin{definition}{\rm Let $-\infty<s<+\infty$, $0<p<\infty$, $0<q\le\infty$ and $w\in\mathscr{A}_\infty$. Then $b_{pq}^{s,w}$ is the collection of all \begin{equation}\label{sequences'}\lambda=\{\lambda_{00\tau}\}_{\tau\in\mathbb{Z}^N}\cup\{\lambda_{id\tau}\}_{{i=1,\ldots,2^N-1};\,{d\in\mathbb{N};\,\tau\in\mathbb{Z}^N}}\quad\textrm{with}\quad \lambda_{00\tau},\lambda_{id\tau}\in\mathbb{C}\end{equation} such that \begin{multline}\label{0404}\|\lambda\|_{{b}^{s,w}_{pq}}=\biggl(\int_{\mathbb{R}^N}\Bigl(\sum_{\tau\in\mathbb{Z}^N}\bigl|\lambda_{00\tau}\bigr|\chi_{0\tau}^{(p)}(x)\Bigr)^pw(x)\,dx\biggr)^{\frac{1}{p}}\\+
\Biggl(\sum_{d=1}^\infty 2^{d(s-N/p)q}\sum_{i=1}^{2^N-1}\biggl(\int_{\mathbb{R}^N}\Bigl(\sum_{\tau\in\mathbb{Z}^N} \bigl|\lambda_{id\tau}\bigr|\chi_{d\tau}^{(p)}(x)\Bigr)^p w(x)\,dx\biggr)^\frac{q}{p}\Biggr)^\frac{1}{q}<\infty,\end{multline} and $f_{pq}^{s,w}$ is the collection of all sequences of the form \eqref{sequences'} satisfying the condition
\begin{multline}\label{0404'}\|\lambda\|_{{f}^{s,w}_{pq}}=
\biggl(\int_{\mathbb{R}^N}\Bigl(\sum_{\tau\in\mathbb{Z}^N}\Bigr| \lambda_{00\tau}\chi_{0\tau}^{(p)}(x)\Bigr|^q\biggr)^\frac{p}{q}w(x)\,dx\biggr)^{\frac{1}{p}}\\
+\Biggl(\int_{\mathbb{R}^N}\biggl(\sum_{d=1}^\infty 2^{dsq}\sum_{i=1}^{2^N-1}\sum_{\tau\in\mathbb{Z}^N}\Bigr| \lambda_{id\tau}\chi_{d\tau}^{(p)}(x)\Bigr|^q\biggr)^\frac{p}{q}w(x)\,dx\Biggr)^{\frac{1}{p}}<\infty.\end{multline} The (quasi--)norms \eqref{0404} and \eqref{0404'} should be modified in a standard way in the case $q=\infty$.}\end{definition}

Characterisation of function spaces $A_{pq}^{s,w}(\mathbb{R}^N)$ with $w\in\mathscr{A}_\infty^\loc$ by local means was given in \cite[Theorem 5.8]{WBan}. This important result admits the following reformulation. 
\begin{theorem}\label{LocTh}
Let $0<p<\infty$, $0<q\le\infty$ and $-\infty<s<+\infty$. Assume $w\in\mathscr{A}_\infty$. Suppose that $k_{id\tau}$ are kernels according to Definition \ref{kernels} with fixed $\mathbf{C}>0$ and $\mathbf{A},\mathbf{B}\in\mathbb{N}_0$ satisfying the conditions $$\mathbf{A}\ge \max\Bigl\{0,[\sigma_{p}(w)-s],{[N(\boldsymbol{r}_0-1)/p-s]}+1\Bigr\}\qquad \textrm{and}\qquad \mathbf{B}\ge (1+[s])_+$$ if $A_{pq}^{s,w}(\mathbb{R}^N)=B_{pq}^{s,w}(\mathbb{R}^N)$, or
$$\mathbf{A}\ge \max\Bigl\{0,[\sigma_{pq}(w)-s],{[N(\boldsymbol{r}_0-1)/p-s]}+1\Bigr\}\qquad \textrm{and}\qquad \mathbf{B}\ge (1+[s])_+$$ when $A_{pq}^{s,w}(\mathbb{R}^N)$ denotes $F_{pq}^{s,w}(\mathbb{R}^N)$. Let $k(f)$ be as in Definition \ref{localmeans}. Then for some $c>0$ and all $f\in A_{pq}^{s,w}(\mathbb{R}^N)$ $$\|k(f)\|_{{a}_{pq}^{s,w}}\le c\|f\|_{A_{pq}^{s,w}(\mathbb{R}^N)},$$ where ${a}^{s,w}_{pq}$ stands for either ${b}_{pq}^{s,w}$ or ${f}_{pq}^{s,w}$, respectively to the meaning of $A_{pq}^{s,w}(\mathbb{R}^N)$. \end{theorem}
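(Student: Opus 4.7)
\smallskip

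\noindent\textbf{Proof proposal.} The plan is to derive $\|k(f)\|_{a_{pq}^{s,w}}\le c\,\|f\|_{A_{pq}^{s,w}(\mathbb{R}^N)}$ by comparing the local means $k_{id\tau}(f)=\langle f,k_{id\tau}\rangle$ with the convolution building blocks $\boldsymbol{\varphi}_e\ast f$ of Definition \ref{Def2}. First observe that the condition $\mathbf{A}\ge [N(\boldsymbol{r}_0-1)/p-s]+1$ together with Remark \ref{dfo} guarantees that every $f\in A_{pq}^{s,w}(\mathbb{R}^N)$ belongs to $\mathscr{D}'_{\mathbf{A}}(\mathbb{R}^N)\cap\mathscr{S}'_e(\mathbb{R}^N)$, so the pairings $\langle f,k_{id\tau}\rangle$ with the $C^{\mathbf{A}}$ kernels are well defined, consistently with the assumption in Definition \ref{localmeans}.

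The first step will be to invoke a Calder\'on-type reproducing identity $f=\sum_{e=0}^\infty\widetilde{\boldsymbol{\varphi}}_e\ast\boldsymbol{\varphi}_e\ast f$, convergent in $\mathscr{S}'_e(\mathbb{R}^N)$, available in the Rychkov/Schott framework (see \cite{R,Sch,WBan}) once $\boldsymbol{\varphi}_0$ is chosen with a sufficiently large number $\Gamma$ of vanishing moments in \eqref{Iphi0}; in particular I will demand $\Gamma\ge\mathbf{A}$. Inserting this decomposition into $\langle f,k_{id\tau}\rangle$ gives
\begin{equation*}
k_{id\tau}(f)=\sum_{e=0}^\infty\int_{\mathbb{R}^N}\bigl(\widetilde{\boldsymbol{\varphi}}_e\ast k_{id\tau}\bigr)(y)\,\bigl(\boldsymbol{\varphi}_e\ast f\bigr)(y)\,dy.
\end{equation*}

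The second step is the pointwise $e$-versus-$d$ comparison. For $e\ge d$ I will push the $\mathbf{B}$ vanishing moments of $k_{id\tau}$ onto a Taylor expansion of $\widetilde{\boldsymbol{\varphi}}_e$, which produces a factor $2^{-(e-d)(\mathbf{B}+N)}$, together with the support localisation $\mathbf{C}\,Q_{d\tau}$; this uses $\mathbf{B}\ge(1+[s])_+$ to absorb the weight $2^{ds}$. For $e<d$ the roles swap: the smoothness $\mathbf{A}$ of $k_{id\tau}$ combines with the moment condition $\Gamma\ge\mathbf{A}$ of $\widetilde{\boldsymbol{\varphi}}_e$ to yield a factor $2^{-(d-e)(\mathbf{A}+N)}$. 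Both regimes combine into a Peetre-type majorant
\begin{equation*}
2^{ds}\,|k_{id\tau}(f)|\,\chi_{d\tau}^{(p)}(x)\lesssim\sum_{e=0}^\infty 2^{-|d-e|\delta}\,2^{es}\,\bigl(\boldsymbol{\varphi}_e\ast f\bigr)^{*}_{a,2^{-d}}(x),
\end{equation*}
where $(\cdot)^{*}_{a,2^{-d}}$ denotes the Peetre maximal function of parameter $a>0$ and scale $2^{-d}$, and $\delta>0$ depends on the slack in the choice of $\mathbf{A},\mathbf{B}$. The $d=0$ term is handled separately without invoking moment vanishing, using only smoothness $\mathbf{A}$.

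The final step is to apply the weighted vector-valued Peetre/Fefferman--Stein maximal inequality adapted to $w\in\mathscr{A}_\infty^{\loc}$. Choosing $0<t<\min\{p,q,\boldsymbol{r}_0\}$ so that $w\in\mathscr{A}_{p/t}^{\loc}$, the parameter $a$ can be taken equal to $N/t$; the requirement $\mathbf{A}\ge[\sigma_p(w)-s]$ in the $B$-case, respectively $\mathbf{A}\ge[\sigma_{pq}(w)-s]$ in the $F$-case, is exactly what makes such a $t$ admissible, so that the resulting majorant is dominated in $L_p(\mathbb{R}^N,w)$ (or $L_p(\ell_q)$) by $\|(\boldsymbol{\varphi}_e\ast f)_e\|_{\ell_q(L_p(w))}$ (or its Triebel--Lizorkin counterpart). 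A discrete Young-type inequality in $e$ against the summable kernel $2^{-|d-e|\delta}$ converts this into an estimate by $\|f\|_{A_{pq}^{s,w}(\mathbb{R}^N)}$, yielding the claim; the finite sum over $i=1,\ldots,2^N-1$ is absorbed into the constant.

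The main obstacle will be the last step: the weighted vector-valued maximal inequality is classical for $\mathscr{A}_p$ weights, and its $\mathscr{A}_p^{\loc}$ version (needed here) has to be imported from \cite{R,WBan,IS}; the delicate point is that the Peetre maximal construction must be localised on cubes of size~$1$, then patched via the extension lemma \cite[Lemma 1.1]{R} that turns $\mathscr{A}_p^{\loc}$ weights into honest $\mathscr{A}_p$ weights on each such cube. Verifying that the support condition $\mathrm{supp}\,k_{id\tau}\subset \mathbf{C}\,Q_{d\tau}$ and the scaling used above are compatible with this localisation is the real technical core; the remaining combinatorics (index $i$, and bookkeeping of the $d=0$ scaling term) is routine.
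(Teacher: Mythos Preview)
The paper does not give its own proof of this theorem: it is stated as a reformulation of \cite[Theorem~5.8]{WBan} (see the sentence preceding the theorem), so there is nothing in the paper to compare your argument against. Your sketch---Calder\'on reproducing formula from \cite{R}, almost-orthogonality estimates trading the $\mathbf{B}$ moments of the kernels against the $\mathbf{A}$ smoothness, reduction to the Peetre maximal function, and the $\mathscr{A}_p^{\loc}$ vector-valued maximal inequality of \cite{R,WBan}---is precisely the route taken in those references, and the parameter bookkeeping you indicate (why the thresholds on $\mathbf{A}$ and $\mathbf{B}$ arise) is correct. The only point worth tightening is that in the $B$-case the auxiliary exponent $t$ need only satisfy $t<\min\{p,\boldsymbol{r}_0\}$, not $t<\min\{p,q,\boldsymbol{r}_0\}$; the $q$-constraint enters only in the $F$-case, which is why the two hypotheses on $\mathbf{A}$ differ by $\sigma_p(w)$ versus $\sigma_{pq}(w)$.
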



For proving our main result we shall need the local reproducing formula  for $f\in A_{pq}^{s,w}(\mathbb{R}^N)$ by V.S. Rychkov \cite{R} utilising the idea from \cite{FJ}. Denote by $\{\boldsymbol{\varphi}_d\}_{d\in\mathbb{N}_0}$ the family of functions from $\mathscr{D}(\mathbb{R}^N)$ satisfying \eqref{Iphi1}--\eqref{Iphi0} with $\Gamma=(1+[s])_+$. It was established in \cite[Theorem 1.6]{R} that for any given integer $\mathbf{L}\ge 0$ there exists another family of functions $\{\boldsymbol{\psi}_d\}_{d\in\mathbb{N}_0}\subset\mathscr{D}(\mathbb{R}^N)$ such that $\boldsymbol{\psi}_1$ has at least $\mathbf{L}\ge 0$ vanishing moments and 
\begin{equation}\label{arep}
f(x)=\sum_{d\in\mathbb{N}_0}\boldsymbol{\psi}_d\ast\boldsymbol{\varphi}_d\ast f
\qquad\textrm{in }\mathscr{D}'(\mathbb{R}^N),\quad\textrm{for all }f\in \mathscr{D}'(\mathbb{R}^N).\end{equation}

\subsection{The main theorem}\label{mn}
We shall represent a distribution $f\in A_{pq}^{s,w}(\mathbb{R}^N)$ as the series
\begin{equation*}
f=\sum_{\tau\in\mathbb{Z}^N}\langle f,\mathbf{\Phi}_{\tau}\rangle \mathbf{\Phi}_\tau+
\sum_{d\in\mathbb{N}}\sum_{i=1}^{2^N-1}\sum_{\tau\in\mathbb{Z}^N}\langle f,\mathbf{\Psi}_{i(d-1)\tau}\rangle\mathbf{\Psi}_{i(d-1)\tau}\end{equation*} 
converging in $\mathscr{S}'_e(\mathbb{R}^N)$.
Our main result reads as follows.
 
\begin{theorem}\label{main}
Let $0<p<\infty$, $0<q\le\infty$, $-\infty<s<+\infty$ and $w\in\mathscr{A}_\infty^\loc$. Let 
${\mathbf{\Phi}},\, {\mathbf{\Psi}}_{i}\in C^{n_0-1}(\mathbb{R}^N)$ with $i=1,\ldots,2^N-1$ be functions satisfying \eqref{wavelets_main}. We assume \begin{equation}\label{condB}n_0\ge \max\Bigl\{0,[s]+1,{\bigl[N(\boldsymbol{r}_0-1)/p-s\bigr]}+1,\bigl[\sigma_{p}(w)-s\bigr]\Bigr\}+1\end{equation} in the case $A_{pq}^{s,w}(\mathbb{R}^N)=B_{pq}^{s,w}(\mathbb{R}^N)$, and
\begin{equation}\label{condF}n_0\ge \max\Bigl\{0,[s]+1,{\bigl[N(\boldsymbol{r}_0-1)/p-s\bigr]}+1,\bigl[\sigma_{pq}(w)-s\bigr]\Bigr\}+1\end{equation} when $A_{pq}^{s,w}(\mathbb{R}^N)$ denotes $F_{pq}^{s,w}(\mathbb{R}^N)$. Then $f\in\mathscr{S}'_e(\mathbb{R}^N)$ belongs to $A_{pq}^{s,w}(\mathbb{R}^N)$ if and only if it can be represented as
\begin{equation*}
f=\sum_{\tau\in\mathbb{Z}^N} \lambda_{00\tau}\mathbf{\Phi}_\tau+
\sum_{d\in\mathbb{N}}\sum_{i=1}^{2^N-1}\sum_{\tau\in\mathbb{Z}^N}\lambda_{id\tau}2^{-dN/2}\mathbf{\Psi}_{i(d-1)\tau},
\end{equation*} where $\lambda\in a_{pq}^{s,w}$ and the series converges in $\mathscr{S}'_e(\mathbb{R}^N)$. This representation is unique with
$$\lambda_{00\tau}=\langle f,\mathbf{\Phi}_{\tau}\rangle\quad(\tau\in\mathbb{Z}^N),\qquad   \lambda_{id\tau}=2^{dN/2}\langle f,\mathbf{\Psi}_{i(d-1)\tau}\rangle\quad(i=1,\ldots,2^N-1;\,d\in\mathbb{N};\,\tau\in\mathbb{Z}^N)$$
and $I\colon f\mapsto\Bigl\{\bigl\{\lambda_{00\tau}\bigr\}\cup\bigl\{\lambda_{id\tau}\bigr\}\Bigr\}$ is a linear isomorphism of $A_{pq}^{s,w}(\mathbb{R}^N)$ onto $a_{pq}^{s,w}$. Besides, \begin{equation}\label{isom}(i)\ \|\lambda\|_{a_{pq}^{s,w}}\lesssim
\|f\|_{A_{pq}^{s,w}(\mathbb{R}^N)}\lesssim \|\boldsymbol{\lambda}\|_{\boldsymbol{a}_{pq}^w},\qquad (ii)\ \|\boldsymbol{\lambda}\|_{\boldsymbol{a}_{pq}^w}\lesssim \|{\lambda}\|_{{a}_{pq}^{s,w}},
\end{equation} with $\boldsymbol{\lambda}_{00\tau}={\lambda}_{00\tau}$, $\tau\in\mathbb{Z}^N$, and $\boldsymbol{\lambda}_{id\tau}=2^{d\boldsymbol{\sigma}}{\lambda}_{id\tau}$, ${i=1,\ldots,2^N-1}$, ${d\in\mathbb{N}}$, $\tau\in\mathbb{Z}^N$, where $\boldsymbol{\sigma}=s-N/p$ when $\boldsymbol{a}_{pq}^w$ denotes $\boldsymbol{b}_{pq}^w$, and $\boldsymbol{\sigma}=s$ when $\boldsymbol{a}_{pq}^w$ denotes $\boldsymbol{f}_{pq}^w$.\end{theorem}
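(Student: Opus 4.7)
The plan is to combine the atomic representation theorem (Theorem \ref{AtomTh}) with the local means characterisation (Theorem \ref{LocTh}) and Rychkov's reproducing formula \eqref{arep}, exploiting the fact that the compactly supported wavelets $\mathbf{\Phi}_\tau$ and $\mathbf{\Psi}_{i(d-1)\tau}$ simultaneously qualify as atoms and as kernels. First I would verify this dual role under \eqref{condB}/\eqref{condF}. The supports are compact by \eqref{vazhno}, the smoothness is $C^{n_0-1}$ by \eqref{wavelets_main}, and the vanishing moments of $\mathbf{\Psi}_i$ of order $n_0$ come from the factor $(\mathrm{e}^{i\omega/2}-1)^{n+1}$ in \eqref{Npsi_nL} (tensorised). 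Since $n_0-1 \ge [s]+1 \ge (1+[s])_+$ and $n_0 \ge \max\{-1,[\sigma_{p}(w)-s],[\sigma_{pq}(w)-s],[N(\boldsymbol{r}_0-1)/p-s]+1\}+1$, a suitable scalar multiple of $\mathbf{\Phi}_\tau$ is a $1_\mathbf{K}$-atom, a suitable multiple of $2^{-dN/2}\mathbf{\Psi}_{i(d-1)\tau}$ is an $(s,p)_{\mathbf{K},\mathbf{L}}$-atom (with $\mathbf{K}=n_0-1$, $\mathbf{L}=n_0$), and the same functions serve as kernels in the sense of Definition \ref{kernels}.

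Next I would establish the two main estimates. For the first inequality in \eqref{isom}(i), fix $f\in A_{pq}^{s,w}(\mathbb{R}^N)$; by Remark \ref{dfo} it is a distribution of finite order, so the pairings $\lambda_{00\tau}=\langle f,\mathbf{\Phi}_\tau\rangle$ and $\lambda_{id\tau}=2^{dN/2}\langle f,\mathbf{\Psi}_{i(d-1)\tau}\rangle$ are well defined. Applying Theorem \ref{LocTh} with the kernels $\mathbf{\Phi}$, $\mathbf{\Psi}_i$ (after absorbing constants) yields $\|\lambda\|_{a_{pq}^{s,w}}\lesssim\|f\|_{A_{pq}^{s,w}(\mathbb{R}^N)}$. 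For the second inequality, given any representation of $f$ with coefficients $\boldsymbol{\lambda}\in\boldsymbol{a}_{pq}^{w}$, I would invoke Theorem \ref{AtomTh}(ii): the series converges in $\mathscr{S}'_e(\mathbb{R}^N)$ to an element of $A_{pq}^{s,w}(\mathbb{R}^N)$ with $\|f\|_{A_{pq}^{s,w}(\mathbb{R}^N)}\lesssim\|\boldsymbol{\lambda}\|_{\boldsymbol{a}_{pq}^{w}}$. The bijection \eqref{isom}(ii) between the two sequence spaces, with the rescaling $\boldsymbol{\lambda}_{id\tau}=2^{d\boldsymbol{\sigma}}\lambda_{id\tau}$, is a direct book-keeping exercise: the factor $2^{d(s-N/p)}$ (respectively $2^{ds}$) in the $b$-case \eqref{0404} and $f$-case \eqref{0404'} is exactly absorbed by the $L_p$-normalisation $2^{dN/p}$ of $\chi_{d\tau}^{(p)}$ combined with the $L_2$-normalisation $2^{dN/2}$ built into ${\mathbf{\Psi}}_{id\tau}$.

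The main obstacle is proving that the specific expansion
\[
f=\sum_{\tau}\lambda_{00\tau}\mathbf{\Phi}_\tau+\sum_{d,i,\tau}\lambda_{id\tau}2^{-dN/2}\mathbf{\Psi}_{i(d-1)\tau}
\]
actually reproduces $f$ in $\mathscr{S}'_e(\mathbb{R}^N)$ with these particular coefficients, and that this representation is unique. For reproduction I would apply Rychkov's local formula \eqref{arep} to rewrite $f=\sum_{d}\boldsymbol{\psi}_d\ast\boldsymbol{\varphi}_d\ast f$ in $\mathscr{D}'(\mathbb{R}^N)$, observe that each term is a smooth function belonging to $L_2(\mathbb{R}^N,w)$-localised pieces, and expand each in the $L_2$ Riesz basis generated by integer dilates/translates of $\mathbf{\Phi}$ and $\mathbf{\Psi}_i$ (which generate $V_d\oplus W_d=V_{d+1}$ via the tensor MRA structure from Section \ref{MRA}). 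Matching expansion coefficients at each dyadic level, and invoking Theorem \ref{AtomTh}(ii) to transfer $L_2$-convergence to $\mathscr{S}'_e$-convergence with control in $A_{pq}^{s,w}$-norm, yields the identification $\lambda_{00\tau}=\langle f,\mathbf{\Phi}_\tau\rangle$ and $\lambda_{id\tau}=2^{dN/2}\langle f,\mathbf{\Psi}_{i(d-1)\tau}\rangle$. Uniqueness then follows from the Riesz basis property: any two $a_{pq}^{s,w}$-representations differ by a sequence producing the zero distribution, which by testing against the dual biorthogonal system (also compactly supported and within the smoothness class forced by \eqref{condB}/\eqref{condF}) must be identically zero. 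A secondary technical point is to confirm that the $\mathscr{S}'_e$-convergence is respected at each stage: this follows because the partial sums are dominated in the $A_{pq}^{s,w}$-norm, and the embedding $A_{pq}^{s,w}(\mathbb{R}^N)\hookrightarrow\mathscr{S}'_e(\mathbb{R}^N)$ is continuous by the weighted-$L_p$ domination of $\mathscr{S}_e$-functions (property (iv) of $\mathscr{S}_e(\mathbb{R}^N)$).
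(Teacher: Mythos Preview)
Your estimates \eqref{isom} via Theorems \ref{AtomTh} and \ref{LocTh} match the paper's approach and are fine. The genuine gap is in the reproduction step. You propose to expand the Rychkov pieces $\boldsymbol{\psi}_d\ast\boldsymbol{\varphi}_d\ast f$ in the $L_2$ Riesz basis $\{\mathbf{\Phi}_\tau,\mathbf{\Psi}_{id\tau}\}$ and then ``match coefficients'' to recover $\lambda_{id\tau}=2^{dN/2}\langle f,\mathbf{\Psi}_{i(d-1)\tau}\rangle$. But this basis is only \emph{semi}-orthogonal, not orthonormal: the Riesz expansion coefficients are obtained by pairing against the \emph{dual} system, not against $\mathbf{\Phi},\mathbf{\Psi}$ themselves. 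So the Riesz expansion does not deliver the specific coefficients claimed in the theorem. Your uniqueness argument inherits the same problem: you invoke ``testing against the dual biorthogonal system (also compactly supported and within the smoothness class)'', but the dual of $\{\mathbf{\Phi}_\tau,\mathbf{\Psi}_{id\tau}\}$ is \emph{not} compactly supported --- undoing the localisation $\mathbf{A}_n(\omega)$, $\mathcal{A}_n(\omega)$ in \eqref{phi_fint}, \eqref{Npsihat'} brings back the infinite-support Battle--Lemari\'e functions.

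The paper closes this gap by a different, quite specific mechanism that you have not identified. It defines $g$ as the series with the proposed coefficients, pairs $g$ with a single $\mathbf{\Psi}_{i^\ast(d^\ast-1)\tau^\ast}$, and uses that each $\mathbf{\Psi}$ is a \emph{finite} linear combination of orthonormal Battle--Lemari\'e wavelets to reduce $\langle g,\mathbf{\Psi}_{i^\ast(d^\ast-1)\tau^\ast}\rangle$ to a finite sum $\sum_{h}\boldsymbol{\lambda}_h\langle f,\mathbf{\Psi}_{i^\ast(d^\ast-1)(\tau^\ast+h)}\rangle$. The decisive algebraic fact is $\sum_h\boldsymbol{\lambda}_h=1$, which holds precisely because of the normalisations by $\mathbf{\Lambda}'_{n_l}$, $\mathbf{\Lambda}''_{n_l}$ built into \eqref{vazhnoo} (see \eqref{positive0}, \eqref{positive}); this is the reason those constants are there at all. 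A translation of $f$ by $h/2^{d^\ast}$ combined with Rychkov's reproducing formula (applied with correspondingly shifted kernels $\widetilde{\boldsymbol{\varphi}}_d,\widetilde{\boldsymbol{\psi}}_d$) then identifies the sum with $\langle f,\mathbf{\Psi}_{i^\ast(d^\ast-1)\tau^\ast}\rangle$, and a density argument via local containment in unweighted $B^\sigma_{pp}$ extends $\langle g,\cdot\rangle=\langle f,\cdot\rangle$ from finite combinations of $\mathbf{\Psi}$'s to all of $C_0^\infty$. None of this is captured by a generic Riesz-basis argument.
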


\begin{proof}[Proof] We follow the idea taken from \cite[Theorem 6.2]{WBan}.

The estimate $\|\boldsymbol{\lambda}\|_{\boldsymbol{a}_{pq}^{w}}\lesssim \|{\lambda}\|_{{a}_{pq}^{s,w}}$ is obvious.
Assume that $\|\boldsymbol{\lambda}\|_{\boldsymbol{a}_{pq}^w}<\infty$.
Let $f\in\mathscr{S}'_e(\mathbb{R}^N)$ and $$f=\sum_{\tau\in\mathbb{Z}^N} \boldsymbol{\lambda}_{00\tau}\mathbf{\Phi}_\tau+
\sum_{d\in\mathbb{N}}\sum_{i=1}^{2^N-1}\sum_{\tau\in\mathbb{Z}^N}\boldsymbol{\lambda}_{id\tau}2^{-d(s+N/2-N/p)}\mathbf{\Psi}_{i(d-1)\tau}.$$ Then
$a_{00\tau}(x)=\mathbf{\Phi}_\tau(x)$ are $1_\mathbf{K}-$atoms and $a_{id\tau}(x)=2^{-d(s+N/2-N/p)}\mathbf{\Psi}_{i(d-1)\tau}(x)$ are $(s,p)_{\mathbf{K},\mathbf{L}}-$ atoms with $\mathbf{K}=\mathbf{L}=n_0-1$. Thus, by Theorem \ref{AtomTh}, {$f\in A_{pq}^{s,w}(\mathbb{R}^N)$} and the right hand side of \eqref{isom}(i) is performed.

Now let {$f\in A_{pq}^{s,w}(\mathbb{R}^N)$}. We take ${k}_{00\tau}(x)=\mathbf{\Phi}_{\tau}(x)$, $\tau\in\mathbb{Z}^N$, and ${k}_{id\tau}(x)=2^{dN/2}\mathbf{\Psi}_{i(d-1)\tau}(x)$, ${i=1,\ldots,2^N-1}$, ${d\in\mathbb{N}}$, $\tau\in\mathbb{Z}^N$, as kernels of local means with $\mathbf{A}=\mathbf{B}=n_0-1$. Thus, the left hand side of the inequality \eqref{isom}(i) follows by Theorem \ref{LocTh}. From here and atomic decomposition
$$g=\sum_{\tau\in\mathbb{Z}^N} k_{00\tau}(f)\mathbf{\Phi}_\tau+
\sum_{d\in\mathbb{N}}\sum_{i=1}^{2^N-1}\sum_{\tau\in\mathbb{Z}^N}k_{id\tau}(f)2^{-dN/2}\mathbf{\Psi}_{i(d-1)\tau}\in A_{pq}^{s,w}(\mathbb{R}^N).$$ 
Denote $\mathbf{\Psi}_{0(-1)\tau}:=\mathbf{\Phi}_\tau$. 
By \eqref{ForRepr'}, \eqref{wavelets_main} {and \eqref{vazhnoo},} the $\mathbf{\Psi}_{i(d-1)\tau}$, $i=0,\ldots,2^{N}-1$, $d\in\mathbb{N}_{0}$, $\tau\in\mathbb{Z}^N$, are functions of $N$ variables having a form of products of $N$ functions, either $\widetilde{\mathbf{\Phi}}$ or $\widetilde{\mathbf{\Psi}}$ of one variable $x_l$, staying at $l-$th place, $l=\{1,\ldots,N\}$. Recall that each of $\widetilde{\mathbf{\Phi}}$ or $\widetilde{\mathbf{\Psi}}$ {are} finite number linear combinations of integer shifts of either ${\phi}_{n_l,\boldsymbol{k}_l}$ or ${\psi}_{n_l,\boldsymbol{k}_l,\boldsymbol{s}_l}$, which are elements of an orthogonal wavelet basis in $L_2(\mathbb{R}^N)$. 
Moreover, measures of the supports of $\widetilde{\mathbf{\Phi}}$ and $\widetilde{\mathbf{\Psi}}$ are equal to {$n_l+1$} and $2n_l+2m_l\Bbbk_l+1$, respectively (see \eqref{vazhno}). 

It follows from {Remark \ref{dfo}} that the dual pairing $\langle g,\mathbf{\Psi}_{i^\ast(d^\ast-1)\tau^\ast}\rangle$ with some $i^\ast=0,1,\ldots,$ $2^N-1$, ${d^\ast\in\mathbb{N}_0}$, $\tau^\ast\in\mathbb{Z}^N$ makes sense. Then
\begin{multline}\label{go}\langle g,\mathbf{\Psi}_{i^\ast(d^\ast-1)\tau^\ast}\rangle=
\sum_{d\in\mathbb{N}_0}\sum_{i=0}^{2^N-1}\sum_{\tau\in\mathbb{Z}^N}k_{id\tau}(f)2^{-dN/2}\langle\mathbf{\Psi}_{i(d-1)\tau},\mathbf{\Psi}_{i^\ast(d^\ast-1)\tau^\ast}\rangle\\
=\sum_{\tau\in\mathbb{Z}^N}k_{i^\ast d^\ast\tau}(f)2^{-d^\ast N/2}\langle\mathbf{\Psi}_{i^\ast(d^\ast-1)\tau},\mathbf{\Psi}_{i^\ast(d^\ast-1)\tau^\ast}\rangle=
\sum_{h\in\mathbf{Z}_{n,i^\ast}}\boldsymbol{\lambda}_{h}\langle f,\mathbf{\Psi}_{i^\ast(d^\ast-1)(\tau^\ast+h)}\rangle.\end{multline} Here $h=(h_1,\ldots,h_N)$ and, in view of product type structure of $\mathbf{\Psi}_{i^\ast(d^\ast-1)\tau^\ast}$, 
\begin{equation}\label{inn}\boldsymbol{\lambda}_{h}:=\langle \mathbf{\Psi}_{i^\ast(d^\ast-1)(\tau^\ast+h)},\mathbf{\Psi}_{i^\ast(d^\ast-1)\tau^\ast}\rangle=\boldsymbol{\lambda}_{h_1}\cdot\ldots\cdot\boldsymbol{\lambda}_{{h}_N},\quad \quad h\in\mathbf{Z}_{n,i^\ast},\end{equation} with $\boldsymbol{\lambda}_{h_l}=\langle \widetilde{\mathbf{\Phi}}_{i^\ast(d^\ast-1)(\tau_l^\ast+h_l)},\widetilde{\mathbf{\Phi}}_{i^\ast(d^\ast-1)\tau_l^\ast}\rangle$ or $\boldsymbol{\lambda}_{h_l}=\langle \widetilde{\mathbf{\Psi}}_{i^\ast(d^\ast-1)(\tau_l^\ast+h_l)},\widetilde{\mathbf{\Psi}}_{i^\ast(d^\ast-1)\tau_l^\ast}\rangle$, which depend on $r_1(n),\ldots,$ $r_n(n)$ (and, possibly, on $r_1(m),\ldots,r_m(m)$ in $\widetilde{\mathbf{\Psi}}$ case). Since $\mathbf{\Psi}_{i^\ast(d^\ast-1)\tau^\ast}$ have bounded supports, the coefficients $\boldsymbol{\lambda}_{h}$ 
are non-trivial on a bounded set $\mathbf{Z}_{n,i^\ast}=\mathbf{Z}_{(n_1,\ldots,n_N),i^\ast}$ only. Indeed, if $i^\ast=0$ then
$\mathbf{Z}_{n,0}=\{-n_1,\ldots,0,\ldots,n_1\}\times\ldots\times\{-n_N,\ldots,0,\ldots,\\n_N\}$, since each of ${\mathbf{\Psi}}_{0(-1)\tau^\ast}={\mathbf{\Phi}}_{\tau'=(\tau^\ast_1,\ldots,\tau^\ast_N)}$ is $N-$terms product of $\widetilde{\mathbf{\Phi}}_{\tau^\ast_l}$, $l=1,\ldots,N$. If $i^\ast\not =0$ then, each ${\mathbf{\Psi}}_{i^\ast(d^\ast-1)\tau^\ast}$ has either $\widetilde{\mathbf{\Phi}}_{i^\ast(d^\ast-1)\tau_l^\ast}$ with $l\in N'$ or $\widetilde{\mathbf{\Psi}}_{i^\ast(d^\ast-1)\tau_l^\ast}$ with $l\in N''$ at the $l-$th place, where $l=1,\ldots,N$ and $N'\cup N''=N$. This means that $\mathbf{Z}_{n,i^\ast}$ is $N-$terms product of $\{-n_l,\ldots,0,\ldots,n_l\}$ and/or $\{-2n_l-2m_l\Bbbk_l,\ldots,0,\ldots,2n_l+2m_l\Bbbk_l\}$. 

Therefore, inner products $\langle \mathbf{\Psi}_{i^\ast(d^\ast-1)(\tau^\ast+h)},\mathbf{\Psi}_{i^\ast(d^\ast-1)\tau^\ast}\rangle$ in \eqref{inn} have $N$ terms of the forms 
$$\boldsymbol{\lambda}_{h_l}=\langle \widetilde{\mathbf{\Phi}}_{i^\ast(d^\ast-1)(\tau_l^\ast+h_l)},\widetilde{\mathbf{\Phi}}_{i^\ast(d^\ast-1)\tau_l^\ast}\rangle\ \ (l\in N')$$ or $$\boldsymbol{\lambda}_{h_l}=\langle \widetilde{\mathbf{\Psi}}_{i^\ast(d^\ast-1)(\tau_l^\ast+h_l)},\widetilde{\mathbf{\Psi}}_{i^\ast(d^\ast-1)\tau_l^\ast}\rangle\ \ (l\in N'').$$ They are non-zero as long as $|h_l|\le n_l$ or $|h_l|\le 2n_l+2m_l\Bbbk_l$, respectively (see details of constructing $\widetilde{\mathbf{\Phi}}_{n_l,\boldsymbol{k}_l}$ and $\widetilde{\mathbf{\Psi}}_{n_l,\boldsymbol{k}_l,\boldsymbol{s}_l}$ from orthogonal to each other integer shifts of ${\phi}_{n_l,\boldsymbol{k}_l}$ and ${\psi}_{n_l,\boldsymbol{k}_l,\boldsymbol{s}_l}$ in \S~\ref{NN}). Hee we recall also that for each $l\in\{1,\ldots,N\}$ $$\widetilde{\mathbf{\Phi}}_{n_{{l}},\boldsymbol{k}_{{l}}}=\bigl(\boldsymbol{\Lambda}'_{n_{{l}}}\bigr)^{-1}\sum_{\kappa_{{l}}=0}^{n_{{l}}}\boldsymbol{\alpha}'_{\kappa_{{l}}}\cdot \phi_{n_{{l}},\boldsymbol{k}_{{l}}-\kappa_{{l}}}$$ and $$\widetilde{\mathbf{\Psi}}_{n_{{l}},m_{{l}}(\Bbbk_{{l}});\boldsymbol{k}_{{l}},\boldsymbol{s}_{{l}}}=\bigl(\boldsymbol{\Lambda}''_{n_{{l}}}\bigr)^{-1}\sum_{|\kappa_{{l}}|\le n_{{l}}+m_{{l}}\Bbbk_{{l}}}\boldsymbol{\alpha}''_{\kappa_{{l}}}\cdot \psi_{n_{{l}},\boldsymbol{k}_{{l}},\boldsymbol{s}_{{l}}+\kappa_{{l}}}.$$ Therefore, for a fixed set of $l\in\{1,\ldots,N\}\setminus\boldsymbol{l}$ we have in the case $\boldsymbol{l}\in N'$, taking into account \eqref{positive0} and the fact that each of $\widetilde{\mathbf{\Phi}}$ is a combination of orthogonal to each other $n_{\boldsymbol{l}}+1$ sequential elements $\phi_{n_{\boldsymbol{l}},\boldsymbol{k}_{\boldsymbol{l}}}$: 
\begin{multline*}\sum_{h_{{\boldsymbol{l}}}}\boldsymbol{\lambda}_{h_{{\boldsymbol{l}}}}=
\sum_{|h_{\boldsymbol{l}}|\le n_{\boldsymbol{l}}}\boldsymbol{\lambda}_{h_{{\boldsymbol{l}}}}=\bigl(\mathbf{\Lambda}'_{n_{\boldsymbol{l}}}\bigr)^{-2}\biggl[\sum_{\kappa_{\boldsymbol{l}}=0}^{ n_{\boldsymbol{l}}}\bigl(\boldsymbol{\alpha}'_{\kappa_{\boldsymbol{l}}}\bigr)^2+
\sum_{|\kappa^\ast_{\boldsymbol{l}}-\kappa^\star_{\boldsymbol{l}}|= 1}\boldsymbol{\alpha}'_{\kappa^\ast_{\boldsymbol{l}}}\cdot \boldsymbol{\alpha}'_{\kappa^\star_{\boldsymbol{l}}}\\+\ldots+\sum_{|\kappa^\ast_{\boldsymbol{l}}-\kappa^\star_{\boldsymbol{l}}|= n_{\boldsymbol{l}}}\boldsymbol{\alpha}'_{\kappa^\ast_{\boldsymbol{l}}}\cdot \boldsymbol{\alpha}'_{\kappa^\star_{\boldsymbol{l}}}\biggr]=
\bigl(\mathbf{\Lambda}'_{n_{\boldsymbol{l}}}\bigr)^{-2}\biggl[\sum_{\kappa_{\boldsymbol{l}}=0}^{ n_{\boldsymbol{l}}}\boldsymbol{\alpha}'_{\kappa_{\boldsymbol{l}}}\biggr]^2=1.\end{multline*}
Thus, we obtain, by grouping, if $i^\ast=0$:
\begin{equation}\label{La'}\sum_{h\in\mathbf{Z}_{n,i^\ast=0}}\boldsymbol{\lambda}_{h}=\sum_{(h_1,\ldots,h_N)\in\mathbf{Z}_{n,i^\ast=0}}\boldsymbol{\lambda}_{h}\cdot\ldots\cdot\boldsymbol{\lambda}_{h_N}=\prod_{l=1}^N\biggl[\bigl(\mathbf{\Lambda}'_{n_{{l}}}\bigr)^{-2}\Bigl(\sum_{\kappa_{{l}}=0}^{ n_{{l}}}\boldsymbol{\alpha}'_{\kappa_{{l}}}\Bigr)^2\biggr]=1.\end{equation} Analogously, in the case $\boldsymbol{l}\in N''$, we have, taking into account \eqref{positive},
\begin{multline*}\sum_{h_{{\boldsymbol{l}}}}\boldsymbol{\lambda}_{h_{{\boldsymbol{l}}}}=
\sum_{|h_{\boldsymbol{l}}|\le 2n_{\boldsymbol{l}}+2m_{\boldsymbol{l}}\Bbbk_{\boldsymbol{l}}}\boldsymbol{\lambda}_{h_{{\boldsymbol{l}}}}=\bigl(\mathbf{\Lambda}''_{n_{\boldsymbol{l}}}\bigr)^{-2}\biggl[\sum_{|\kappa_{\boldsymbol{l}}|\le n_{\boldsymbol{l}}+m_{\boldsymbol{l}}\Bbbk_{\boldsymbol{l}}}\bigl(\boldsymbol{\alpha}''_{\kappa_{\boldsymbol{l}}}\bigr)^2+
\sum_{|\kappa^\ast_{\boldsymbol{l}}-\kappa^\star_{\boldsymbol{l}}|= 1}\boldsymbol{\alpha}''_{\kappa^\ast_{\boldsymbol{l}}}\cdot \boldsymbol{\alpha}''_{\kappa^\star_{\boldsymbol{l}}}\\+\ldots+\sum_{|\kappa^\ast_{\boldsymbol{l}}-\kappa^\star_{\boldsymbol{l}}|= 2n_{\boldsymbol{l}}+2m_{\boldsymbol{l}}\Bbbk_{\boldsymbol{l}}}\boldsymbol{\alpha}''_{\kappa^\ast_{\boldsymbol{l}}}\cdot \boldsymbol{\alpha}''_{\kappa^\star_{\boldsymbol{l}}}\biggr]=
\bigl(\mathbf{\Lambda}''_{n_{\boldsymbol{l}}}\bigr)^{-2}\biggl[\sum_{|\kappa_{\boldsymbol{l}}|\le n_{\boldsymbol{l}}+m_{\boldsymbol{l}}\Bbbk_{\boldsymbol{l}}}\boldsymbol{\alpha}''_{\kappa_{\boldsymbol{l}}}\biggr]^2=1,\end{multline*}
since each of $\widetilde{\mathbf{\Psi}}$ is a linear combination of orthogonal to each other $2n_{\boldsymbol{l}}+2m_{\boldsymbol{l}}\Bbbk_{\boldsymbol{l}}+1$ sequential elements $\psi_{n_{\boldsymbol{l}},\boldsymbol{k}_{\boldsymbol{l}},\boldsymbol{s}_{\boldsymbol{l}}}$.
Therefore, on the strength of product type structure of ${\mathbf{\Psi}}_{i^\ast(d^\ast-1)\tau^\ast}$ having either $\widetilde{\mathbf{\Phi}}_{i^\ast(d^\ast-1)\tau_l^\ast}$ if $l\in N'$ or $\widetilde{\mathbf{\Psi}}_{i^\ast(d^\ast-1)\tau_l^\ast}$ if $l\in N''$ at the $l-$th place, where $N'\cup N''=N$, we obtain:
\begin{multline}\label{La''}\sum_{h\in\mathbf{Z}_{n,i^\ast\not=0}}\boldsymbol{\lambda}_{h}=\sum_{(h_1,\ldots,h_N)\in\mathbf{Z}_{n,i^\ast\not=0}}\boldsymbol{\lambda}_{h}\cdot\ldots\cdot\boldsymbol{\lambda}_{h_N}=\prod_{l\in N'}\biggl[\bigl(\mathbf{\Lambda}'_{n_{{l}}}\bigr)^{-2}\Bigl(\sum_{\kappa_{{l}}=0}^{ n_{{l}}}\boldsymbol{\alpha}'_{\kappa_{{l}}}\Bigr)^2\biggr]\\ \times\prod_{l\in N''}\biggl[\bigl(\mathbf{\Lambda}''_{n_{{l}}}\bigr)^{-2}\Bigl(\sum_{|\kappa_{{l}}|\le n_{{l}}+m_{{l}}\Bbbk_{{l}}}\boldsymbol{\alpha}''_{\kappa_{{l}}}\Bigr)^2\biggr]=1.\end{multline}

Further, we can rewrite for $f\in A_{pq}^{s}(\mathbb{R}^N,w)$ and some $h\in\mathbf{Z}_{n,i^\ast}$: \begin{multline*}\langle f(\cdot),\mathbf{\Psi}_{i^\ast(d^\ast-1)(\tau^\ast+h)}(\cdot)\rangle=
\langle f(\cdot),\mathbf{\Psi}_{i^\ast(d^\ast-1)\tau^\ast}(\cdot-h/2^{d^\ast})\rangle=\\\langle f(\cdot+h/2^{d^\ast}),\mathbf{\Psi}_{i^\ast(d^\ast-1)\tau^\ast}(\cdot)\rangle=:\langle f_{-\frac{h}{2^{d^\ast}}},\mathbf{\Psi}_{i^\ast(d^\ast-1)\tau^\ast}\rangle.\end{multline*} From this and \eqref{go},
\begin{equation}\label{drdr}\langle g,\mathbf{\Psi}_{i^\ast(d^\ast-1)\tau^\ast}\rangle=
\sum_{h\in\mathbf{Z}_{n,i^\ast}}\boldsymbol{\lambda}_{h}\langle f_{-\frac{h}{2^{d^\ast}}},\mathbf{\Psi}_{i^\ast(d^\ast-1)\tau^\ast}\rangle.\end{equation} On the strength of \eqref{arep}, representation for $f_{-\frac{h}{2^{d^\ast}}}$ can be written as follows:
\begin{align}f_{-\frac{h}{2^{d^\ast}}}(x)=
f(x+{h}/{2^{d^\ast}})=&\sum_{\nu\in\mathbb{N}_0}\int_{\mathbb{R}^N}\boldsymbol{\psi}_\nu(x+{h}/{2^{d^\ast}}-y)\,(\boldsymbol{\varphi}_\nu\ast f_{-{h}/{2^{d^\ast}}})(y)\,dy\nonumber\\ \label{harep}
=&\sum_{\nu\in\mathbb{N}_0}\int_{\mathbb{R}^N}\widetilde{{\boldsymbol{\psi}}}_\nu(x-y)\,(\widetilde{{\boldsymbol{\varphi}}}_\nu\ast f)(y)\,dz.\end{align} Here functions $\widetilde{{\boldsymbol{\psi}}}_d$ and $\widetilde{{\boldsymbol{\varphi}}}_d$ are shifted (in $-h/2^{d^\ast}$) versions of proper ${{\boldsymbol{\psi}}}_d$ and ${{\boldsymbol{\varphi}}}_d$, inheriting their properties (see \cite[Theorem 1.6]{R}). Thus, \eqref{harep} can be viewed as local representation of the original element $f\in A_{pq}^{s}(\mathbb{R}^N,w)$ with help of shifted with respect to $\{{\boldsymbol{\psi}}\}_{d\in\mathbb{N}_0}$ and $\{{\boldsymbol{\varphi}}\}_{d\in\mathbb{N}_0}$ functions $\{\widetilde{\boldsymbol{\psi}}\}_{d\in\mathbb{N}_0}$ and $\{\widetilde{\boldsymbol{\varphi}}\}_{d\in\mathbb{N}_0}$ preserving the same properties. This coincides with \eqref{arep} for $f$ and allows to write \eqref{drdr} in the following form:
\begin{equation*}
\langle g,\mathbf{\Psi}_{i^\ast(d^\ast-1)\tau^\ast}\rangle=\langle f,\mathbf{\Psi}_{i^\ast(d^\ast-1)\tau^\ast}\rangle
\cdot \sum_{h\in\mathbf{Z}_{n,i^\ast}}\boldsymbol{\lambda}_{h}=:\langle f,\mathbf{\Psi}_{i^\ast(d^\ast-1)\tau^\ast}\rangle\end{equation*} (see \eqref{La'} and \eqref{La''}).
This could be extended to any finite linear combination of $\mathbf{\Psi}_{i^\ast(d^\ast-1)\tau^\ast}$. 

{Both distributions $f$ and $g$ are locally contained in $B^\sigma_{pp}(\mathbb{R}^N)$ for any $\sigma<s-N(\boldsymbol{r}_0-1)/p$. This follows} {from \cite[(21)]{Ma} and, in addition, from elementary embeddings between weighted Besov} {and Triebel--Lizorkin spaces if $A_{pq}^{s,w}(\mathbb{R})=F_{pq}^{s,w}(\mathbb{R}^N)$ (see e.g. \cite[\S~2.3]{R}).} Any $\phi\in C_0^{\infty}(\mathbb{R}^N)$ has the unique $L_2(\mathbb{R}^N)$ wavelet representation{, which converges in the dual space of $B^\sigma_{pp}(\mathbb{R}^N)$ if we choose $\sigma$} {so that $n-1>\max\{\sigma_p-\sigma,\sigma\}$ for $A_{pq}^{s,w}(\mathbb{R}^N)=B_{pq}^{s,w}(\mathbb{R}^N)$ and $n-1>\max\{\sigma_{p,q}-\sigma,\sigma\}$ in the case $A_{pq}^{s,w}(\mathbb{R}^N)=F_{pq}^{s,w}(\mathbb{R}^N)$ \cite{TBan}. This} implies $\langle g,\phi\rangle=\langle f,\phi\rangle$ for all $\phi\in C_0^{\infty}(\mathbb{R}^N)$, that is $g=f$.

By the above, $f\in\mathscr{S}'_e(\mathbb{R}^N)$ belongs to $A_{pq}^{s,w}(\mathbb{R}^N)$ if and only if $$f=\sum_{\tau\in\mathbb{Z}^N} \lambda_{00\tau}\mathbf{\Phi}_\tau+
\sum_{d\in\mathbb{N}}\sum_{i=1}^{2^N-1}\sum_{\tau\in\mathbb{Z}^N}\lambda_{id\tau}2^{-dN/2}\mathbf{\Psi}_{i(d-1)\tau},
$$ and $\lambda\in a_{pq}^{s,w}$ (or $\boldsymbol{\lambda}\in \boldsymbol{a}_{pq}^{w}$). This representation is unique with $\lambda_{00\tau}=k_{00\tau}(f)$, $\tau\in\mathbb{Z}^N$, and $\lambda_{id\tau}=k_{id\tau}(f)$, ${i=1,\ldots,2^N-1}$, ${d\in\mathbb{N}}$, $\tau\in\mathbb{Z}^N$, and the estimates \eqref{isom} hold. The uniqueness of the coefficients {implies} that $I$ is the required isomorphism. \end{proof}

\begin{remark}\label{Rrr}{\rm Theorem \ref{main} is using spline wavelet bases \eqref{ForRepr'} generated by \eqref{wavelets_main}. Elements of \eqref{wavelets_main} have explicit forms, and their main components $B-$splines satisfy dif\-fe\-ren\-ti\-a\-tion property \eqref{diff}. This fact makes our result applicable to the study of integration or differentiation operators of natural orders in $A_{pq}^{s,w}(\mathbb{R}^N)\cap L_1^\textrm{loc}(\mathbb{R}^N)$.

Another decomposition theorem in function spaces $ A_{pq}^{s,w}(\mathbb{R}^N)$ with local Muckenhoupt weights was performed in \cite[Theorem 6.2]{WBan} in terms of Daubechies wavelets \cite{Dau}. Our main theorem is analogous to that result, which is valid under the same assumptions \eqref{condB} and \eqref{condF} on parameters. But our Theorem \ref{main} is using dictionary of Battle--Lemari\'{e} spline wavelet systems instead of that of Daubechies type as in \cite[Theorem 6.2]{WBan}. 

Theorem \ref{main} is an extension of spline wavelet decompositions from \cite[\S~2.5]{Tr5} by H. Triebel in unweighted spaces $A_{pq}^{s}(\mathbb{R}^N)$, upto weighted $A_{pq}^{s,w}(\mathbb{R}^N)$ with $w\in\mathscr{A}_\infty^\loc$ (see also \cite[Proposition 4.2]{JMAA}). Theorem \ref{main} generalises also \cite[Theorem 4.7]{RMC}, which works for function spaces $A_{pq}^{s}(\mathbb{R}^N,w)$ with $w\in\mathscr{A}_\infty$.

Observe that in the unweighted case the restrictions on $n_0$ can be taken less strong than in \eqref{condB} and \eqref{condF} (see e.g. \cite[\S~2.5]{Tr5} or \cite[Proposition 4.2]{JMAA}). In particular, even the Haar system can be used for decomposing Besov spaces $B_{pq}^{s}(\mathbb{R}^N)$ in the range $1/p-1<s< \min\{1/p,1\}$. For Triebel--Lizorkin spaces $F_{pq}^{s}(\mathbb{R}^N)$ conditions on $s$ look a bit more complicated \cite[\S~2.5]{Tr5}. By recent papers of G. Garrig\'{o}s, A. Seeger and T. Ullrich one can see that those restrictions on smoothness $s$ are optimal when decomposing $A_{pq}^{s}(\mathbb{R}^N)$ by the Haar systems \cite{GSU1, GSU2, GSU3}.}
\end{remark}



\noindent
Ushakova Elena Pavlovna

\smallskip
\noindent
V.A. Trapeznikov Institute of Control Sciences of Russian Academy of Sciences

\noindent
65 Profsoyuznaya St., Moscow 117997, Russia.

\smallskip
\noindent
Steklov Mathematical Institute of Russian Academy of Sciences

\noindent
8 Gubkina St., Moscow 119991, Russia.

\smallskip
\noindent
Computing Center of Far--Eastern Branch of Russian Academy of Sciences

\noindent
65 Kim Yu Chena St., Khabarovsk 680000, Russia.

\smallskip
\noindent
E-mail: elenau@inbox.ru

\end{document}